\documentclass{article}
\usepackage{amssymb,amsfonts,latexsym,wasysym}

\setlength{\parskip}{1ex} \setlength{\headsep}{0.5cm}
\setlength{\topmargin}{0.5cm} \setlength{\textheight}{21.3cm}
\setlength{\footskip}{1cm} \setlength{\oddsidemargin}{1cm}
\setlength{\evensidemargin}{0.5cm} \setlength{\textwidth}{13.9cm}

\newtheorem{theorem}{Theorem}[section]
\newtheorem{remark}[theorem]{Remark}
\newtheorem{lemma}[theorem]{Lemma}

\newtheorem{prop}[theorem]{Proposition}
\newtheorem{cor}[theorem]{Corollary}
\newtheorem{ex}[theorem]{Examples}
\newtheorem{ex1}[theorem]{Example}
\newtheorem{defi}[theorem]{Definition}

\renewcommand{\c}{\mathbb{M}}
\newcommand{\Aut}{\mathrm{Aut}}

\newcommand{\tp}{\mathrm{tp}}

\newcommand{\acl}{\mathrm{acl}}
\newcommand{\dcl}{\mathrm{dcl}}
\newcommand{\dom}{\mathrm{dom}}
\newcommand{\rng}{\mathrm{rng}}
\newcommand{\SU}{\mathrm{SU}}
\newcommand{\acli}{\mathrm{acl}^{\mathrm{eq}}}
\newcommand{\Th}{\mathrm{Th}}
\newcommand{\g}[1]{\mathfrak{#1}}
\newcommand{\On}{\mathrm{On}}
\newcommand{\DD}{\mathbb{D}}
\newcommand{\HH}{\mathbb{H}}
\newcommand{\PP}{\mathbb{P}}

\def\Ind#1#2{#1\setbox0=\hbox{$#1x$}\kern\wd0\hbox to 0pt{\hss$#1\mid$\hss}
\lower.9\ht0\hbox to 0pt{\hss$#1\smile$\hss}\kern\wd0}
\def\ind{\mathop{\mathpalette\Ind{}}}
\def\Notind#1#2{#1\setbox0=\hbox{$#1x$}\kern\wd0\hbox to 0pt{\mathchardef
\nn=12854\hss$#1\nn$\kern1.4\wd0\hss}\hbox to
0pt{\hss$#1\mid$\hss}\lower.9\ht0 \hbox to
0pt{\hss$#1\smile$\hss}\kern\wd0}

\newenvironment{proof}{\vspace{-0.25cm}
{\bf Proof}: }{\hfill $\Box$}

\begin{document}
\title{Orbits of subsets of the monster model and geometric theories}
\author{Enrique Casanovas and Luis Jaime Corredor\thanks{The first  author was partially supported by the Spanish government grant MTM2014-59178-P and by the Catalan government grant  2014SGR-437. The second author was partially supported by Fundaci\'on para la Promoci\'on de la Investigaci\'on y la Tecnolog\'{\i}a from Banco de la Rep\'ublica,  Colombia, grant N.~3.610 and by Facultad de Ciencias from Universidad de los Andes, Bogot\'a, proyecto semilla 2014-2.}}
\date{January 31, 2017. Revised June 1, 2017}
\maketitle

\begin{abstract}   Let $\c$  be the monster model of a complete first-order theory $T$. If  $\DD$  is a subset of $\c$,  following D. Zambella we consider $e(\DD)=\{\DD^\prime\mid (\c,\DD)\equiv (\c,\DD^\prime)\}$ and  $o(\DD)=\{\DD^\prime\mid (\c,\DD)\cong (\c,\DD^\prime)\}$. The general question we ask is when $e(\DD)=o(\DD)$ ?  The case where $\DD$ is $A$-invariant for some small set $A$  is rather straightforward: it just means that $\DD$ is definable. We investigate the case where $\DD$ is not invariant over any small subset. If T is geometric and $(\c,\DD)$ is an $H$-structure (in the sense of A. Berenstein and E.~ Vassiliev) we get some answers. In the case of $SU$-rank one, $e(\DD)$ is always different from $o(\DD)$. In the o-minimal case, everything can happen, depending on the complexity of the definable closure.  We also study the case of lovely pairs of geometric theories.

\end{abstract}

\section{Subsets of the monster model}

We will consider complete first-order theories $T$ having an infinite model.  By a monster model of $T$ we understand a model  $\c$ of $T$ whose universe is a proper class  and realizes all types over all subsets.   Every theory has a monster model and it is unique up to isomorphism.  Alternatively, one can assume the existence of a strongly inaccesible cardinal $\lambda$ and take a saturated model of cardinality $\lambda$ as the monster model of $T$. In this last case only set of cardinality less than $\lambda$ will be considered.    A set is \emph{small} if its cardinality is smaller than $|\c|$, which means  that it is just a set (not a proper class) if we understand $\c$ as a proper class model and that it has cardinality $<\lambda$ if we consider $\c$ as a saturated model of cardinality $\lambda$.  The requirements on $\lambda$ for the existence of a saturated model of cardinality $\lambda$ are weaker than strong inaccesibility (see Theorem VIII.4.7 in~\cite{Sh90a}), but for some results  we will need to use the assumption  that if $A$ is small, its power set is small too.

The language of $T$ is $L$. If $\varphi$ is a formula, $\varphi\in L$ means that the symbols of $\varphi$  belong to $L$.  We consider $n$-ary relations $\DD$ on $\c$ and  corresponding expansions $(\c,\DD)$  to the language $L\cup\{D\}$, where $D$ is a new $n$-ary predicate symbol.     There are two classes of relations naturally associated to $\DD$, the class $o(\DD)$ of all images of $\DD$ under automorphisms of $\c$ and the class $e(\DD)$ of all relations with expansion  elementarily equivalent to $(\c,\DD)$.  If the relation $\DD$ consists of a single $n$-tuple  $\overline{a}$, then $e(\DD)$ is the class of all $n$-tuples having the same type as $\overline{a}$ over the empty set and clearly $o(\DD)=e(\DD)$. The main question we address here was raised by D.~Zambella and it is to explain the meaning of $o(\DD)=e(\DD)$. There are two cases to be considered. If $\DD$ is invariant over some small set, then the answer is more or less straightforward: it just means that $\DD$ is definable. The case where $\DD$ is not invariant over any small set  does not seem to have been investigated. We will explore the question in the setting of geometric theories, considering two particular expansions: $H$-structures and lovely pairs. We will offer some partial answers illustrating different behaviour in the case of $\SU$ rank one theories and in the case of o-minimal theories.

\begin{defi}\rm  Let $\DD$ be an $n$-ary relation on the monster model $\c$.    For any small subset $A$ of $\c$ we define:
\begin{itemize} 
\item $o(\DD/A)=\{ \DD^\prime \subseteq \c^n\mid (\c,\DD)\cong_A (\c,\DD^\prime)\}= \{f(\DD)\mid f\in\Aut(\c/A)\}$ 
\item $e(\DD/A)=\{\DD^\prime\subseteq \c^n\mid (\c,\DD)\equiv_A (\c,\DD^\prime)\}$. 
\end{itemize}
We call $o(\DD/A)$ the \emph{orbit} of $\DD$ over $A$ and we call $e(\DD/A)$  the \emph{elementary class} of $\DD$ over $A$.
\end{defi}

\begin{remark} $o(\DD/A)$ and $ e(\DD/A)$  are classes of the equivalence relations $\cong_A$ and $\equiv_A$ on $\mathcal{P}(\c^n)$ and each class  $e(\DD/A)$  splits into  $\cong_A$-classes, one of them being  $o(\DD/A)$.
Note that $| \{e(\DD/A)\mid \DD\subseteq \c^n\}|\leq 2^{|T(A)|}$
\end{remark}

Working in $T(A)$  we may sometimes assume $A=\emptyset$.  We will use the notation  $o(\DD)= o(\DD/\emptyset)$  and $e(\DD)=e(\DD/\emptyset)$. 

 These notions have been discussed by D.~Zambella in~\cite{Zambella14} and in a talk given at the Barcelona Logic Seminar. Some of the results presented in this section were known to him, in particular (with minor differences and with different proofs) a great part of  Remark~\ref{s1},  propositions~\ref{s3} and~\ref{s6} and Corollary~\ref{s4}.  Very likely theseresults are generally known and should be considered folklore. We thank R. Farr\'e  for some useful discussions concerning this section.

\begin{defi}\rm We say that $\DD$ is \emph{saturated} if  the expansion $(\c,\DD)$  is saturated, that is, if this expansion  is the monster model of $\Th(\c,\DD)$.
\end{defi}

\begin{remark}\label{s1} 
\begin{enumerate}
\item  For any $\DD$ there is always some saturated $\DD^\prime\in e(\DD)$. 
\item If $\DD$ is saturated, then $o(\DD)=\{\DD^\prime \in e(\DD) \mid \DD^\prime \mbox{ is saturated  }\}$. 
\item $e(\DD)=o(\DD)$ if and only if every $\DD^\prime\in e(\DD)$ is saturated.
\end{enumerate}
\end{remark}
\begin{proof}  If $T^\prime$ is a complete and consistent extension of $T$ in a language $L^\prime\supseteq L$, then the monster model $\c$ of $T$ can be expanded  to a monster model  $\c^\prime$  of $T^\prime$. Moreover, if $(\c,\DD)\equiv(\c,\DD^\prime)$ are monster models, then $(\c,\DD)\cong (\c,\DD^\prime)$.
\end{proof}

\begin{remark}\label{s2}
\begin{enumerate}
\item If $e(\DD)=o(\DD)$, then  $e(\DD/A)=o(\DD/A)$ for every small set $A$.
\item If $e(\DD/A)=o(\DD/A)$ for some small set $A$, then $\DD$ is saturated.
\end{enumerate}
\end{remark}
\begin{proof} Notice that for any small set $A$, $(\c,\DD)$ is saturated iff $(\c_A,\DD)$ is saturated. Hence \emph{1} follows  from  item 3 of Remark~\ref{s1}. For \emph{ 2}, observe that the assumption together with item 1 of Remark~\ref{s1} imply that $(M_A,\DD)$ is saturated.  
\end{proof}

We don't know if item \emph{2} of Remark~\ref{s2} can  be strengthened to:  if $e(\DD/A)=o(\DD/A)$ for some small set $A$, then $e(\DD)=o(\DD)$.  We conjecture that it is not true in general, but it is true if $\DD$ is $B$-invariant for some small set $B$, as follows easily from Proposition~\ref{s3} below.

\begin{defi} \rm  An $n$-ary relation $\DD$ in $\c$ is \emph{definable} if it is definable over some small set $A$, in which case we also say that it is \emph{$A$-definable}. In the case $A=\emptyset$ we say  it is  	\emph{$0$-definable}.  The relation  $\DD$ is \emph{$A$-invariant} if every $A$-automorphism of the monster model $\c$   fixes $\DD$ setwise, that is, if $o(\DD/A)=\{\DD\}$.
\end{defi}

 The next proposition explains the meaning of  $e(\DD)=o(\DD)$   when $\DD$ is $A$-invariant for some small set $A$: it means that $\DD$ is $A$-definable and also means that $\DD$ is saturated.  The same question is much more complicated if $\DD$ is not $A$-invariant for any small set $A$, as will be seen in the next sections. As a side remark note that if $\DD$ is $A$-invariant for some small set $A$, then $\DD$ is not small, unless $\DD\subseteq\acl(A)$.

\begin{prop}\label{s3} The following are equivalent for any small set $A$:
\begin{enumerate}
\item $\DD$ is $A$-definable.
\item $e(\DD)=o(\DD)$ and $\DD$ is $A$-invariant.
\item $e(\DD/A)= \{\DD\}$
\item $\DD$ is saturated and $\DD$ is $A$-invariant.
\end{enumerate}
\end{prop}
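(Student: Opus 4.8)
The plan is to prove the four conditions equivalent cyclically, $(1)\Rightarrow(2)\Rightarrow(3)\Rightarrow(4)\Rightarrow(1)$, since three of the links follow quickly from the earlier remarks and essentially all the content sits in the single implication $(4)\Rightarrow(1)$. I record at the outset the auxiliary fact that \emph{any definable relation is saturated}: if $\DD'=\varphi(\c,\bar a')$ with $\bar a'$ small, then $(\c,\DD')$ is a definitional expansion of $\c$ with names for $\bar a'$, and since naming a small set preserves saturation, $(\c,\DD')$ realizes all types over small sets and is therefore saturated.

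For $(1)\Rightarrow(2)$: $A$-invariance of an $A$-definable relation is immediate from $f(\varphi(\c,\bar a))=\varphi(\c,\bar a)$ for $f\in\Aut(\c/A)$. Writing $\DD=\varphi(\c,\bar a)$ with $\bar a\in A$, the $L\cup\{D\}$-sentence $\exists\bar y\,\forall\bar x\,(D(\bar x)\leftrightarrow\varphi(\bar x,\bar y))$ holds in $(\c,\DD)$, hence in $(\c,\DD')$ for every $\DD'\in e(\DD)$; so each such $\DD'$ is definable over a finite set and therefore saturated by the fact above, and Remark~\ref{s1}(3) gives $e(\DD)=o(\DD)$. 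For $(2)\Rightarrow(3)$: Remark~\ref{s2}(1) turns $e(\DD)=o(\DD)$ into $e(\DD/A)=o(\DD/A)$, and $A$-invariance is exactly $o(\DD/A)=\{\DD\}$, so $e(\DD/A)=\{\DD\}$. For $(3)\Rightarrow(4)$: from $o(\DD/A)\subseteq e(\DD/A)=\{\DD\}$ together with $\DD\in o(\DD/A)$ we get $A$-invariance, and Remark~\ref{s1}(1) read over $A$ produces a saturated member of $e(\DD/A)=\{\DD\}$, which must be $\DD$ itself.

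The substance is $(4)\Rightarrow(1)$, which I would prove by a compactness argument in the Stone space. Passing to $T(A)$ and naming $A$ by constants — which affects neither saturation (as noted after Remark~\ref{s2}) nor definability, and turns $A$-invariance into invariance under all of $\Aut(\c)$ — I may assume $A=\emptyset$. So suppose $\DD$ is $\emptyset$-invariant and $(\c,\DD)$ is saturated, and set $T'=\Th(\c,\DD)$. Invariance means membership in $\DD$ depends only on the $L$-type over $\emptyset$, so each $p$ in the space $S_n(\emptyset)$ of complete $L$-types over $\emptyset$ satisfies either $p(\c)\subseteq\DD$ or $p(\c)\cap\DD=\emptyset$; let $P$ collect the types of the first kind. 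If $p\in P$, then $p(\bar x)\cup\{\neg D(\bar x)\}$ cannot be realized in the saturated structure $(\c,\DD)$, for a realization would lie in $p(\c)\setminus\DD$; hence it is inconsistent with $T'$, and compactness yields $\varphi_p\in p$ with $T'\vdash\forall\bar x\,(\varphi_p(\bar x)\to D(\bar x))$. The clopen set $[\varphi_p]\subseteq S_n(\emptyset)$ then satisfies $p\in[\varphi_p]\subseteq P$, so $P$ is open; the symmetric argument with $\neg D$ shows its complement is open, whence $P$ is clopen. A clopen subset of $S_n(\emptyset)$ equals $[\theta]$ for a single $L$-formula $\theta$, so $\DD=\theta(\c)$ is $0$-definable, and unwinding the reduction gives $A$-definability.

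The main obstacle — the only step that genuinely uses saturation rather than bookkeeping — is upgrading the pointwise information ``$P$ is a union of types'' to the \emph{clopenness} of $P$. This is precisely where saturation of $(\c,\DD)$ is indispensable: without it, an $\emptyset$-invariant $\DD$ such as the realization set of a single non-isolated type is type-definable but not definable (and indeed its expansion is not saturated), so the equivalence would fail. The only other point requiring care is the auxiliary fact that definability implies saturation, which is what makes $(1)\Rightarrow(2)$ go through.
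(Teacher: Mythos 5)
Your proof is correct and follows essentially the same route as the paper: the same cycle $(1)\Rightarrow(2)\Rightarrow(3)\Rightarrow(4)\Rightarrow(1)$ with the same justifications for the three easy links, and the same core idea for $(4)\Rightarrow(1)$, namely that invariance makes $\DD$ a union of sets of realizations of types over $A$ while saturation plus compactness upgrades this to definability. Your Stone-space formulation (showing the set $P$ of types contained in $\DD$ is clopen) is just a topological rephrasing of the paper's statement that $D(x)$ is equivalent to a finite disjunction of finite subtypes $\bigvee_{i\in I_0}\bigwedge\pi_{i0}(x)$.
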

\begin{proof} \emph{1} $\Rightarrow$ \emph{2}. If $\DD$ is $A$-definable, it is clearly $A$-invariant.   Moreover, it is saturated: replace the symbol $D$ by the formula defining $\DD$ to check it.  Since every $\DD^\prime\in e(\DD)$ is also definable, it  follows that every  $\DD^\prime\in e(\DD)$ is saturated. By item 3 of Remark~\ref{s1},  $e(\DD)=o(\DD)$.

\emph{2} $\Rightarrow$ \emph{3}. By  Remark~\ref{s2}, $e(\DD/A)=o(\DD/A)$, and by $A$-invariance $o(\DD/A)=\{\DD\}$.

\emph{3} $\Rightarrow$ \emph{4}. The assumption implies $e(\DD/A)=o(\DD/A)=\{\DD\}$ and, by  Remark~\ref{s1},  $\DD$ is saturated.

\emph{4} $\Rightarrow$ \emph{1}.  Assume $\DD$ is $A$-invariant and saturated. By $A$-invariance, $\DD$ is a union $\bigcup_{i\in I} \pi_i(\c)$ where every $\pi_i(x)$ is an  $L$-type over $A$ (take $\pi_i(x)=\tp(a_i/A)$, where $\DD=\{a_i\mid i\in I\}$). Working in the monster model $(\c,\DD)$ we see that for some finite $I_0\subseteq I$, for some finite $\pi_{i0}\subseteq \pi_i$,  $(\c,\DD)\models \forall x(D(x)\leftrightarrow \bigvee_{i\in I_0}\bigwedge \pi_{i0}(x))$
\end{proof}

 \begin{cor} \label{s4} 
$\DD$  is $0$-definable iff   $e(\DD)=\{\DD\}$. 
\end{cor}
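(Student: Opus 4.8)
The plan is to read off the corollary as the special case $A=\emptyset$ of Proposition~\ref{s3}. First I would recall the two conventions already fixed in the text: $e(\DD)$ abbreviates $e(\DD/\emptyset)$, and ``$0$-definable'' means ``$\emptyset$-definable''. Since the empty set is small, Proposition~\ref{s3} applies with $A=\emptyset$; under these identifications its item~1 reads ``$\DD$ is $0$-definable'' and its item~3 reads ``$e(\DD)=\{\DD\}$''. Hence the desired biconditional is exactly the equivalence $1\Leftrightarrow 3$ of that proposition. There is essentially no obstacle: the only thing to check is the bookkeeping that the abbreviations $e(\DD)=e(\DD/\emptyset)$ and $0$-definability$\,=\,\emptyset$-definability line up with the proposition's formulation, which is immediate from the definitions.

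If one preferred a self-contained argument not routed through the full proposition, I would prove the two implications by hand, reusing only Remark~\ref{s1}. For the forward direction, if $\DD=\varphi(\c)$ for some $L$-formula $\varphi(\overline{x})$, then every $\DD^\prime\in e(\DD)$ satisfies $(\c,\DD^\prime)\models\forall \overline{x}\,(D(\overline{x})\leftrightarrow\varphi(\overline{x}))$ because this sentence holds in $(\c,\DD)$; therefore $\DD^\prime=\varphi(\c)=\DD$, giving $e(\DD)=\{\DD\}$. For the converse, assume $e(\DD)=\{\DD\}$. Since automorphic images of $\DD$ lie in $e(\DD)$, we get $o(\DD)=\{\DD\}$, so $\DD$ is $\emptyset$-invariant; and since by item~1 of Remark~\ref{s1} some saturated $\DD^\prime\in e(\DD)$ exists, necessarily $\DD^\prime=\DD$, so $\DD$ is saturated. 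The implication $4\Rightarrow 1$ of Proposition~\ref{s3} (with $A=\emptyset$) then yields that $\DD$ is $0$-definable.

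I expect the whole content to be this recognition of the specialization rather than any new computation, so the ``main step'' is simply matching the notation $e(\DD)=e(\DD/\emptyset)$ to the $A=\emptyset$ instance of Proposition~\ref{s3}.
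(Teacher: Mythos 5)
Your proposal matches the paper exactly: the paper's entire proof of Corollary~\ref{s4} is ``By Proposition~\ref{s3} with $A=\emptyset$,'' which is precisely your main route, and your notational bookkeeping is correct. The optional hand-made argument you sketch is also sound (it still routes through $4\Rightarrow 1$ of Proposition~\ref{s3}), but it adds nothing beyond the specialization the paper intends.
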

\begin{proof}  By Proposition~\ref{s3} with $A=\emptyset$.
\end{proof}

\begin{cor}\label{s9} If  $A$ is a small set and $\DD\subseteq \c^n$ is not $A$-definable, then for some (all) saturated $\DD^\prime\in e(\DD/A)$, $o(\DD^\prime/A)\neq \{\DD^\prime\}$.
\end{cor}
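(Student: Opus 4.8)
The plan is to argue by contradiction and to reduce everything to the equivalences already established in Proposition~\ref{s3}. Recall that $o(\DD'/A)=\{\DD'\}$ is by definition the assertion that $\DD'$ is $A$-invariant, so the conclusion to be proved is exactly that no saturated member of $e(\DD/A)$ is $A$-invariant. I would therefore fix a saturated $\DD'\in e(\DD/A)$ and suppose toward a contradiction that $\DD'$ is $A$-invariant.

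Under that assumption $\DD'$ satisfies condition \emph{4} of Proposition~\ref{s3} (with $\DD$ there replaced by $\DD'$): it is simultaneously saturated and $A$-invariant. The implication \emph{4} $\Rightarrow$ \emph{1} then yields that $\DD'$ is $A$-definable, say $\DD'=\varphi(\c)$ for some $\varphi(x)\in L(A)$. The point is now to push this definability back from $\DD'$ to $\DD$: the $L(A)\cup\{D\}$-sentence $\forall x\,(D(x)\leftrightarrow\varphi(x))$ holds in $(\c,\DD')$, and since $\DD'\in e(\DD/A)$ means $(\c,\DD)\equiv_A(\c,\DD')$, the same sentence holds in $(\c,\DD)$. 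Hence $\DD=\varphi(\c)$ is $A$-definable, contradicting the hypothesis that $\DD$ is not $A$-definable.

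To justify the ``for some (all)'' formulation I would note that the argument above rules out $A$-invariance for every saturated $\DD'\in e(\DD/A)$ simultaneously, so the conclusion holds for all of them; and applying item 1 of Remark~\ref{s1} inside the theory $T(A)$ guarantees that at least one saturated $\DD'\in e(\DD/A)$ actually exists (using that, as observed in the proof of Remark~\ref{s2}, saturation of the expansion is insensitive to naming the small set $A$), so the ``for some'' version is non-vacuous.

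I do not anticipate a serious obstacle, since essentially all of the real work is packaged in Proposition~\ref{s3}; the only point requiring a little care is the transfer step, where one must make sure that the defining formula, combined with the predicate $D$, is expressed as a genuine sentence of $L(A)\cup\{D\}$, so that elementary equivalence over $A$ can legitimately be applied to it.
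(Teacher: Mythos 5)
Your proof is correct and follows essentially the same route as the paper: assume a saturated $\DD'\in e(\DD/A)$ is $A$-invariant, invoke the implication \emph{4} $\Rightarrow$ \emph{1} of Proposition~\ref{s3} to get that $\DD'$ is $A$-definable, and transfer the defining formula back to $\DD$ via $\equiv_A$ to contradict the hypothesis. The paper states the transfer step ("Then $\DD$ is also $A$-definable") without elaboration; your explicit justification of it, and of the existence of a saturated representative, is exactly what is intended.
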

\begin{proof}  Assume $\DD^\prime\in e(\DD/A)$ is saturated and $o(\DD^\prime/A)=\{\DD^\prime\}$, that is,  $\DD^\prime$ is $A$-invariant.  By Proposition~\ref{s3}, $\DD^\prime$ is $A$-definable. Then $\DD$ is also $A$-definable.
\end{proof}

\begin{ex}\label{s5}
\begin{enumerate}
\item  Consider the theory of an equivalence relation $E$ with infinitely many classes, all infinite. Each equivalence class  $\DD$  is definable and therefore $e(\DD)=o(\DD)$. Note that $o(\DD)$ is the set of all equivalence classes.
\item  Let $(\c ,<)$ be the monster model of the theory of a dense linear order without end points and let  $\HH$ be a dense and co-dense subset. As shown in propositions~\ref{h5} and~\ref{o3},  $\HH$ is not invariant over any small set and $e(\HH)=o(\HH)$.
\item Let $\c$ be the monster model of the theory of algebraically closed fields of characteristic zero and let $\mathbb{Q}$ the set of rational numbers. Then $\mathbb{Q}$ is small, $0$-invariant, not definable, and $e(\mathbb{Q})\neq  o(\mathbb{Q})$. Its complement $\mathbb{I}=\c\smallsetminus \mathbb{Q}$ is type-definable over $\emptyset$ but again $e(\mathbb{I})\neq  o(\mathbb{I})$. 
\end{enumerate}
\end{ex}

\begin{prop}\label{s6} The following are equivalent for  any saturated $\DD$:
\begin{enumerate}
\item $\DD$ is definable.
\item  $o(\DD/A)=\{\DD\}$    for some  finite (small) set $A$.
\item $o(\DD/A)$  is finite  for some  finite (small)  set $A$.
\item $o(\DD/A)$ is small for some finite (small) set $A$.
\end{enumerate}
\end{prop}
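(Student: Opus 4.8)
The four conditions plainly decrease in strength, so the trivial implications come first and the real work lies in recovering definability from smallness of the orbit. For $1\Rightarrow 2$, if $\DD$ is defined by a formula $\varphi(x,\bar a)$ then $\bar a$ is a finite parameter set and every $\bar a$-automorphism fixes $\DD$ setwise, so $o(\DD/\bar a)=\{\DD\}$. The implications $2\Rightarrow 3$ and $3\Rightarrow 4$ are immediate, since a singleton is finite and a finite set is small. Note that saturation is not used in any of these; it will enter only in the converse direction, which is the whole point of assuming it.

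The content is therefore $4\Rightarrow 1$, and my plan is to reduce it to Proposition~\ref{s3} by showing that a small orbit forces invariance over a \emph{small} set. Suppose $o(\DD/A)$ is small for some finite (or merely small) set $A$, and enumerate it as $\{\DD_i\mid i<\kappa\}$ with $\kappa<|\c|$ and $\DD_0=\DD$. For each $i$ with $\DD_i\neq\DD$ the symmetric difference of $\DD$ and $\DD_i$ is nonempty, so I can choose an $n$-tuple $\bar c_i$ lying in exactly one of $\DD$, $\DD_i$. Let $B$ be the union of $A$ with all the coordinates of all these $\bar c_i$. Since there are at most $\kappa<|\c|$ witnesses, each of finite length $n$, the set $B$ is small.

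The key claim is that $\DD$ is $B$-invariant. Take any $g\in\Aut(\c/B)$. As $A\subseteq B$, the map $g$ is in particular an $A$-automorphism, so $g(\DD)=\DD_j$ for some $j<\kappa$. If $\DD_j\neq\DD$, look at the witness $\bar c_j$: because $g$ fixes $B$ pointwise it fixes $\bar c_j$, hence $\bar c_j\in g(\DD)$ if and only if $\bar c_j\in\DD$; but $g(\DD)=\DD_j$ and $\bar c_j$ lies in exactly one of $\DD$ and $\DD_j$, which is a contradiction in both cases of the symmetric difference. Therefore $g(\DD)=\DD$ for every $g\in\Aut(\c/B)$, that is, $o(\DD/B)=\{\DD\}$.

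Finally, $\DD$ is saturated by hypothesis and now $B$-invariant with $B$ small, so Proposition~\ref{s3} (its implication $4\Rightarrow 1$, applied with the small set $B$) gives that $\DD$ is $B$-definable, hence definable. The only place saturation is consumed is this last appeal to Proposition~\ref{s3}: without it, invariance over a small set need not yield definability, as Example~\ref{s5} shows for $\mathbb{Q}$. I expect the witness-collection step to be the main point to get right, since one must be certain the collected set stays small, and that is precisely where the hypothesis $|o(\DD/A)|<|\c|$ is used; handling both orientations of the symmetric difference uniformly is the only other care required.
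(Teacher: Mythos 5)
Your proof is correct, but the main implication is handled by a genuinely different route from the paper's. The paper closes the cycle as $4\Rightarrow 3\Rightarrow 1$: for $4\Rightarrow 3$ it shows by a compactness/expansion argument (new predicates $D_i$ and function symbols $F_i$ naming $A$-automorphisms, realized in an expansion of the saturated structure $(\c_A,\DD)$) that an infinite orbit of a saturated $\DD$ has at least $\kappa$ elements for every $\kappa$, hence is not small; and for $3\Rightarrow 1$ it argues contrapositively by induction, repeatedly invoking Corollary~\ref{s9} after adjoining to $A$ witnesses from the symmetric differences $\DD\Delta\DD_i$, to show that a non-definable saturated $\DD$ has infinite orbit over every small set. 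You instead prove $4\Rightarrow 1$ in one step: collect one witness from each symmetric difference $\DD\Delta\DD_i$ over the whole (small) orbit, observe that the resulting small set $B$ forces $B$-invariance of $\DD$, and then quote Proposition~\ref{s3} ($4\Rightarrow 1$ there) to convert saturation plus $B$-invariance into $B$-definability. Your witness argument is sound (both orientations of the symmetric difference are handled, and $B$ stays small since it is a union of at most $\kappa<|\c|$ finite tuples), and your reduction is shorter and avoids both the resplendence-style expansion and the induction. What the paper's longer route buys is the sharper intermediate dichotomy that for saturated $\DD$ the orbit over a small set is never infinite-but-small (it is either finite --- indeed a singleton when definable over that set --- or large), which your argument does not establish directly; your route buys economy and makes transparent that the only use of saturation is the final appeal to Proposition~\ref{s3}.
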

\begin{proof}  Clearly \emph{1} implies \emph{2} (if $\DD$  is definable over the finite set $A$, then $o(\DD/A)=\{\DD\}$),  \emph{2} implies \emph{3},  and \emph{3} implies \emph{4}.

\emph{4} $\Rightarrow$  \emph{3}.  Assume $o(\DD/A)$  is infinite.   Add new  $n$-ary predicates $D_i$ and $1$-ary function symbols $F_i$  to the language for every $i<\kappa$. Let $\Sigma$ be a set of sentences saying that $D_i\neq D_j$  for every $i\neq j$ and  saying that $F_i$ is an  $A$-isomorphism of $\c$ mapping $D$ onto $D_i$. Then $\Sigma\cup\Th(\c,\DD)$ is consistent and  it is satisfied in some  expansion $\c^\prime$ of $(\c_A,\DD)$.  The relations $\DD_i=D_i^{\c^\prime}$  are different $A$-conjugates of $\DD$ for every $i<\kappa$.

\emph{3} $\Rightarrow$ \emph{1}. Assume $\DD$ is not definable.  In order to contradict  \emph{3}, we will prove prove that $o(\DD/A)$ is infinite for every small set $A$. We prove by induction on $n$ that $|o(\DD/A)|\geq n$.  Inductively assume that $\DD_1,\ldots,\DD_n$ are different $A$-conjugates of $\DD$, with $\DD=\DD_1$. Choose tuples $a_i$ such that $a_i\in\DD\Delta \DD_i$   and let   $A^\prime =A\cup\{a_i\mid i=2,\ldots, n\}$.  Since $\DD$ is not $A^\prime$ definable and it is saturated,  we may apply   Corollary~\ref{s9}   to get  some $\DD^\prime\in o(\DD/A^\prime)\smallsetminus \{\DD\}$.  Since $a_i\in A^\prime$, $a_i$ witnesses that $\DD^\prime\neq \DD_i$. Since $A\subseteq A^\prime$, $\DD^\prime\in o(\DD/A)$.  Hence $|o(\DD/A)|\geq n+1$.
\end{proof}

\begin{remark}\label{as5} $e(\DD)$ is finite if and only if it is small.
\end{remark} 
\begin{proof} Assume $e(\DD)$ is infinite. If $\Sigma(D)=\Th(\c,\DD)$, then for every $\kappa$ the following is consistent $\bigcup_{i<\kappa}\Sigma(D_i)\cup \{\neg \forall x(D_i(x)\leftrightarrow D_j(x))\mid i<j<\kappa\}$.
\end{proof}

\begin{cor}\label{s7} The following are equivalent for  any  $\DD$:
\begin{enumerate}
\item $\DD$ is definable.
\item $e(\DD/A)=\{\DD\}$ for some finite (small) set $A$.
\item $e(\DD/A)$  is finite  for some  finite (small) set $A$.
\item $e(\DD/A)$ is small for some finite (small) set $A$.
\item $o(\DD)=e(\DD)$ and $o(\DD/A)$ is small for some finite  (small) set  $A$.
\item $\DD$ is saturated and $o(\DD/A)$ is small for some finite (small) set $A$.
\end{enumerate}
\end{cor}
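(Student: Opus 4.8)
The plan is to pin every condition to condition \emph{1} (definability), using that Proposition~\ref{s3} and Proposition~\ref{s6} already characterize definability and that Remark~\ref{as5} collapses ``small'' to ``finite'' for elementary classes. First I would dispatch the easy implications among the $e$-conditions: \emph{1} $\Rightarrow$ \emph{2} is immediate from Proposition~\ref{s3} (if $\DD$ is $A$-definable with $A$ finite, then $e(\DD/A)=\{\DD\}$), while \emph{2} $\Rightarrow$ \emph{3} $\Rightarrow$ \emph{4} are trivial since a singleton is finite and a finite set is small. For the reverse I would note \emph{4} $\Rightarrow$ \emph{3} by working inside $T(A)$: there $e(\DD/A)$ is precisely the elementary class of $\DD$ computed in that theory, so Remark~\ref{as5} applied over $A$ gives that it is finite as soon as it is small.

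The crux is \emph{3} $\Rightarrow$ \emph{1}. Assuming $e(\DD/A)$ is finite, I would use item 1 of Remark~\ref{s1} (over $A$) to choose a saturated $\DD^\prime\in e(\DD/A)$. Since $\DD^\prime\equiv_A\DD$ we have $e(\DD^\prime/A)=e(\DD/A)$, and because $o(\DD^\prime/A)\subseteq e(\DD^\prime/A)$ this orbit is finite, in particular small. Proposition~\ref{s6}, applied to the \emph{saturated} $\DD^\prime$ over $A$, then yields that $\DD^\prime$ is definable. Proposition~\ref{s3} now gives $e(\DD^\prime)=o(\DD^\prime)$, and as $\DD\in e(\DD^\prime)$ we get $\DD\in o(\DD^\prime)$; thus $\DD$ is the image of a definable set under an automorphism of $\c$, hence definable. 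This closes the cycle $\emph{1}\Leftrightarrow\emph{2}\Leftrightarrow\emph{3}\Leftrightarrow\emph{4}$.

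It remains to fold in \emph{5} and \emph{6}. For \emph{6}: if $\DD$ is definable then Proposition~\ref{s3} makes it saturated with $o(\DD/A)=\{\DD\}$ small, giving \emph{1} $\Rightarrow$ \emph{6}, while Proposition~\ref{s6} gives \emph{6} $\Rightarrow$ \emph{1} directly. For \emph{5}: \emph{1} $\Rightarrow$ \emph{5} again uses Proposition~\ref{s3} ($e(\DD)=o(\DD)$ and $o(\DD/A)=\{\DD\}$); and for \emph{5} $\Rightarrow$ \emph{6} the only point to check is that $\DD$ is saturated, which is forced by $e(\DD)=o(\DD)$ together with item 3 of Remark~\ref{s1}, asserting that every member of $e(\DD)$—in particular $\DD$ itself—is saturated.

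I expect the main obstacle to be the step \emph{3} $\Rightarrow$ \emph{1}: one must resist arguing with $\DD$ directly, since $\DD$ need not be saturated and so Proposition~\ref{s6} does not apply to it. The correct route is to pass to a saturated representative $\DD^\prime$, obtain its definability there, and only then transport definability back to $\DD$ via the orbit equality $e(\DD^\prime)=o(\DD^\prime)$. The relativization of Remark~\ref{as5} to $T(A)$ used in \emph{4} $\Rightarrow$ \emph{3} is routine but should be stated explicitly.
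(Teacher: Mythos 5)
Your proof is correct and follows essentially the same route as the paper: the easy implications are dispatched via Proposition~\ref{s3}, and the key step (closing the cycle from smallness of $e(\DD/A)$ back to definability) passes to a saturated representative $\DD^\prime$, applies Proposition~\ref{s6} to it, and transfers definability back to $\DD$. The only cosmetic differences are that the paper goes directly \emph{4} $\Rightarrow$ \emph{1} rather than through your intermediate \emph{4} $\Rightarrow$ \emph{3} via Remark~\ref{as5}, and cites Remark~\ref{s2} instead of Remark~\ref{s1} for \emph{5} $\Rightarrow$ \emph{6}.
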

\begin{proof} \emph{1} $\Rightarrow$ \emph{2}. By Proposition~\ref{s3}.

\emph{2} $\Rightarrow$ \emph{3}  and  \emph{3} $\Rightarrow$ \emph{4}  are immediate.

\emph{4} $\Rightarrow$ \emph{1}.  Choose some saturated $\DD^\prime\in e(\DD)$ and notice that $e(\DD^\prime/A)$ and $o(\DD^\prime/A)$ are small. By Proposition~\ref{s6}, $\DD^\prime$ is definable.  Hence $\DD$ is definable too.

\emph{1} $\Rightarrow$ \emph{5}.  By Proposition~\ref{s3}.

\emph{5} $\Rightarrow$ \emph{6}. By Remark~\ref{s2}.

\emph{6} $\Rightarrow$ \emph{1}.  By Proposition~\ref{s6}.
\end{proof}

\section{$H$-structures}

In this section $T$ is a  \emph{geometric theory}, which means that the algebraic closure operator $\acl$ defines a pregeometry (has the exchange property) and $T$ eliminates the quantifier $\exists^\infty$ (``there exist infinitely many'').  For example, if $T$ is o-minimal or has $\SU$-rank one, then it is geometric.  In particular, strongly minimal theories are geometric. Geometric theories were introduced by E.~Hrushovski and A.~Pillay in~\cite{HrushovskiPillay94} and they were further investigated by J. Gagelman in~\cite{Gagelman05}.  In any geometric theory  we have a well defined algebraic dimension and we have an independence relation between subsets of the monster model defined by:  $A\ind_B C$  iff  $\dim(A_0/B)=\dim(A_0/BC)$  for every finite $A_0\subseteq A$. An equivalent definition is:  $A\ind_B C$ iff  every subset of $A$  which is algebraically independent over $B$  is also algebraically independent over $BC$.

$H$-structures were first considered by   A.~Dolich, C.~Miller and C.~Steinhorn in the context of some  o-minimal theories in~\cite{DolichMillerSteinhorn16}    and  were  fully investigated in the setting of geometric theories by A.~Berenstein and E.~Vassiliev in~\cite{BerensteinVassiliev12}. Propositions~\ref{h1} and~\ref{h2} and Corollary~\ref{h3} are versions of results of  this last article. Since there are some modifications and moreover we want to make our presentation self-contained, we will give short proofs of these results.

We add a new unary predicate $H$ and consider structures   $(M,H^M)$  in the language $L_H= L\cup\{H\}$  where  $M\models T$.   For any subset $A\subseteq M$,  we use the notation $H(A)= A\cap H^M$. In particular, $H(M)=H^M$.  Syntactically, we use sometimes the short notation $(\exists x\in H)\varphi$  and  $(\forall x\in H)\varphi$ for  $\exists x(H(x)\wedge \varphi)$  and  $\forall x(H(x)\rightarrow \varphi)$ respectively.

\begin{defi} \rm  A structure  $(M,H(M))$  is an \emph{$H$-structure}  if  $H(M)$  is a subset of algebraically independent elements and the following two conditions are satisfied:
\begin{enumerate}
\item Density: If $A\subseteq M$ is finite  and $q(x)\in S_1(A)$ is a non-algebraic $L$-type, then it has some realization  $a\models q$ in $H(M)$.
\item Extension:  If $A\subseteq M$ is finite  and $q(x)\in S_1(A)$ is a non-algebraic $L$-type, then it has some realization  $a\models q$ in $M$   such that $a\not\in \acl(AH(M))$.
\end{enumerate}
\end{defi}

\begin{prop}\label{h1}  Let $(M,H(M))$ and $(N,H(N))$  be $H$-structures and let $I$ be the set of all   $L$-elementary mappings $f$ from $M$ into $N$  with finite domain $A=\dom(f)$   in $M$ and range $B=\rng(f)$ in $N$ and such that  
\begin{enumerate}
\item $a\in H(M)$  iff  $f(a)\in H(N)$  for all $a\in A$
\item $A\ind_{H(A)} H(M)$  and $B\ind_{H(B)}H(N)$.
\end{enumerate}
Then $(M,H(M))$ and $(N,H(N))$  are partially isomorphic via $I$.
\end{prop}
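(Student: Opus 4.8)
The plan is to verify that $I$ is a back-and-forth system, i.e. that it is nonempty and has the extension property; the assertion that $(M,H(M))$ and $(N,H(N))$ are partially isomorphic via $I$ then holds by definition. Nonemptiness is immediate: the empty map is $L$-elementary and satisfies conditions 1 and 2 vacuously, condition 2 reducing to $\emptyset\ind_\emptyset H(M)$, which holds trivially. Since the definition of $I$ is symmetric in $(M,H(M))$ and $(N,H(N))$, it suffices to establish the forth property: given $f\in I$ with $\dom(f)=A$, $\rng(f)=B$, and given $a\in M$, I must find $b\in N$ with $f\cup\{(a,b)\}\in I$ (the back direction being identical with the roles of $M$ and $N$ interchanged).

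I would organize the extension into three cases according to the position of $a$ relative to $\acl(A\,H(M))$, relying on two preliminary facts that follow from condition 2 together with the standard finite character, monotonicity and transitivity of $\ind$ in a geometric theory. The first fact is that $\acl(A)\cap H(M)=H(A)$ whenever $A\ind_{H(A)}H(M)$: if some $h\in H(M)\setminus H(A)$ lay in $\acl(A)$, then $h\ind_{H(A)}A$ together with the algebraic independence of $H(M)$ would give a contradiction. The second is that condition 2 is preserved when one adjoins to $A$ either an element algebraic over $A$, or a single element $a$ with $a\ind_A H(M)$: the first uses the base-monotonicity $A\ind_{H(A)}H(M)\Rightarrow A\ind_{H(A)c}H(M)$ for $c\in H(M)$, and the second uses transitivity; the same computations apply verbatim on the $N$ side.

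In Case 1, $a\in H(M)$, the type $\tp(a/A)$ is non-algebraic (by the first fact $a\notin H(A)$ forces $a\notin\acl(A)$), so $f(\tp(a/A))$ is a non-algebraic $L$-type over $B$ and by \emph{Density} in $N$ I pick a realization $b\in H(N)$; conditions 1 and 2 then hold by base-monotonicity. In Case 3, $a\notin\acl(A\,H(M))$, the type $f(\tp(a/A))$ is again non-algebraic, and by \emph{Extension} in $N$ I pick $b\models f(\tp(a/A))$ with $b\notin\acl(B\,H(N))$; such $b$ lies outside $H(N)$, so condition 1 holds, and condition 2 follows by transitivity from $a\ind_A H(M)$ and the matching $b\ind_B H(N)$. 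Case 2, $a\in\acl(A\,H(M))\setminus H(M)$, is reduced to the algebraic situation: using finite character I choose a finite $H_0\subseteq H(M)$ with $a\in\acl(A\,H_0)$, enlarge the domain to $A\cup H_0$ by applying Case 1 repeatedly to the H-points of $H_0$ (each non-algebraic over the current domain by the first fact), and then extend $L$-elementarily by any $b\in\acl(\rng(f'))$ realizing the image type.

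The step I expect to demand the most care is Case 2, specifically checking that the image $b$ of an algebraic non-H point again falls outside $H(N)$. This is exactly where the first fact is indispensable: since $\acl(\rng(f'))\cap H(N)=H(\rng(f'))$, if $b$ were in $H(N)$ it would equal $f'(h)$ for some H-point $h$ in the domain, whence $\tp(a/\dom(f'))=\tp(h/\dom(f'))$ and therefore $a=h\in H(M)$, contradicting $a\notin H(M)$. Once this is settled the preservation of condition 2 in Case 2 is routine, as $a$ is algebraic over the enlarged domain, and the proof is complete.
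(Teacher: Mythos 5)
Your proof is correct and follows essentially the same route as the paper's: a back-and-forth argument splitting on whether the new element is an $H$-point (use Density), lies in $\acl(A\,H(M))$ (reduce to the $H$-point case plus an algebraic extension, using $\acl(A)\cap H(M)=H(A)$ to control membership in $H$), or lies outside $\acl(A\,H(M))$ (use Extension). The only differences are cosmetic --- you merge the paper's separate cases $a\in\acl(A)$ and $a\in\acl(A\,H(M))$ into one, and you isolate as an explicit lemma the fact $\acl(A)\cap H(M)=H(A)$ that the paper verifies inline.
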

\begin{proof} Let $a\in M$, $f\in I$ and let us show that $f$ can be extended to  some $f^\prime\in I$  with $a\in\dom(f^\prime)$.  Adding some element to the range is similar. There are several cases to be considered:

\emph{Case 1}. $a\in\acl(A)$. Let $p(x)=\tp(a/A)$ and  let  $b\in N$ realize the conjugate  type  $p^ f(x)=\{\varphi(x,f(\overline{c})\mid \varphi(x,\overline{c})\in p\}$.  Let  $A^\prime =Aa$ and $B^\prime =Bb$  and let $f^\prime =f\cup\{(a,b)\}$.  We claim that  $f^\prime \in I$. Since $A\ind_{H(A)} H(M)$, if  $a\in H(M)$, then  $a\in A$  (because $a\ind_{H(A)}a$ implies $a\in\acl(H(A))$, and since $H(M)$ is algebraically independent, this is  only possible if $a\in H(A)$).  Similarly, if $b\in H(N)$, then  $b\in B$.  Hence, $a\in H(M)$  iff  $b\in H(M)$.  If   $a\in H(M)$, then $a\in A$ and   we have  $H(A^\prime) = H(A)a= H(A)$. If   $a\not\in H(M)$,   then  clearly $H(A^\prime)=H(A)$. In both cases   $Aa\ind_{H(A)} H(M)$.  Similarly for $b$. 

\emph{Case 2}. $a\in H(M)$. We can assume $a\not\in \acl(A)$. Let $p(x)=\tp(a/A)$.  By the density property, there is some realization $b$ in $H(N)$ of  $p^f(x)$.  Let $A^\prime =Aa$, $B^\prime =Bb$  and $f^\prime  =f\cup\{(a,b)\}$. Clearly,   $a\in H(M)$  iff   $b\in H(N)$.  Note that  $H(A^\prime)= H(A) a$.   Since $a\in H(M)$,   $A\ind_{H(A)a} H(M)$. It follows that $Aa\ind_{H(A)a}H(M)$.  Similarly for $b$.

\emph{Case 3}.  $a\in \acl(AH(M))$.  There is a finite tuple $\overline{h}\in H(M)$  such that  $a\in\acl(A\overline{h})$.  By  case \emph{2}, there is some extension $f^\prime\in I$ of $f$  with  the tuple $\overline{h}$ in its domain $A^\prime$.  Then  $a\in \acl(A^\prime)$ and we can apply case \emph{1}.

\emph{Case 4}.  $a\not\in\acl(AH(M))$.  Let $p(x)=\tp(a/A)$. By the extension property,   there is a  realization $b\in N$  of  $p^f$  such that $b\not\in\acl(B H(N))$. Let be $A^\prime= Aa$,  $B^\prime=Bb$ and  $f^\prime=f\cup\{(a,b)\}$.  Clearly,  $a\in H(M)$  iff   $b\in H(N)$.  Since  $a\not\in H(M)$,   $H(A^\prime)= H(A)$. Since $a\ind A H(M)$,  it follows that $Aa\ind_{H(A)} H(M)$. Similarly for $b$.
\end{proof}

\begin{defi}\rm \label{axioms} Let $T^\mathrm{indep}$ be the theory with the following set of axioms:
\begin{enumerate}
\item  $H$  is a set of independent elements:   $(\forall x_1\in H)\ldots (\forall x_n\in H)(\forall x\in H)(\bigwedge_{i=1}^n x\neq x_i \wedge \exists^{<k}x\psi(x_1,\ldots,x_n,x)\rightarrow \neg\psi(x_1,\ldots,x_n,x))$.\\
for every $\varphi(x_1,\ldots,x_n,x)\in L$, $\psi(x_1,\ldots,x_n,x)\in L$, for every  $n,k$.
\item Density: $\forall x_1\ldots  x_n ( \exists^\infty x  \varphi(x_1,\ldots,x_n,x)  \rightarrow (\exists x\in H)\varphi(x_1,\ldots,x_n,x))$\\ for every $\varphi(x_1,\ldots,x_n,x)\in L$, for every $n$.
\item Extension: $\forall x_1\ldots x_n(\forall  y_1\ldots y_m\exists^{<k}x\,\psi(x_1,\ldots,x_n,y_1,\ldots,y_m,x)  \wedge  \exists^\infty x  \varphi(x_1,\ldots,x_n,x) $\\ $ \rightarrow \exists x( \varphi(x_1,\ldots,x_n,x)\wedge (\forall y_1\in H)\ldots (\forall y_m\in H)\neg \psi(x_1,\ldots,x_n,y_1,\ldots,y_m,x))) $
\\ for every $\varphi(x_1,\ldots,x_n,x)\in L$,  $\psi(x_1,\ldots,x_n,y_1,\ldots,y_m,x)\in L$, and  $n,m,k$.
\end{enumerate}
\end{defi}

\begin{prop}\label{h2}  All  $H$-structures are models of $T^\mathrm{indep}$ and  any $\omega$-saturated model of  $T^\mathrm{indep}$  is a $H$-structure.
\end{prop}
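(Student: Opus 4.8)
The plan is to prove the two directions separately. For the first direction, I would show that every $H$-structure satisfies each of the three axiom schemes of $T^{\mathrm{indep}}$. The independence axiom (scheme 1) is essentially a direct translation: it asserts that no element of $H(M)$ lies in the algebraic closure of finitely many other elements of $H(M)$ together with parameters, and since by definition $H(M)$ is a set of algebraically independent elements, this holds. Here I would note that $\exists^{<k} x\,\psi$ captures algebraicity because $T$ eliminates $\exists^\infty$. The density axiom (scheme 2) says that whenever $\varphi(\overline{c},x)$ is non-algebraic (has infinitely many solutions, again detectable by $\exists^\infty$), it has a solution in $H$; this is exactly the Density property in the definition of $H$-structure, applied to the non-algebraic type extending $\varphi(\overline{c},x)$. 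The extension axiom (scheme 3) similarly corresponds to the Extension property: given a non-algebraic $\varphi(\overline{c},x)$ and a formula $\psi$ that is algebraic in $x$ uniformly in the $H$-parameters $\overline{y}$, one finds a solution of $\varphi$ avoiding $\acl$ of the $H$-parameters. The uniform bound $\forall \overline{y}\,\exists^{<k}x\,\psi$ is what makes "$a\notin\acl(\overline{c}H(M))$" expressible by a first-order scheme.

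For the converse, let $(M,H(M))$ be an $\omega$-saturated model of $T^{\mathrm{indep}}$. I must verify that $H(M)$ is algebraically independent and that Density and Extension hold. Algebraic independence follows from axiom scheme 1: if some $a\in H(M)$ were in $\acl$ of other elements of $H(M)$ and some parameters, a witnessing formula $\psi$ would contradict the scheme. For Density, given finite $A\subseteq M$ and a non-algebraic $q\in S_1(A)$, I would use $\omega$-saturation together with density axioms to realize $q$ inside $H(M)$: each formula in $q$ is non-algebraic, so the density scheme guarantees partial realizations in $H$, and saturation lets me realize the whole type in $H(M)$ by a compactness-type argument over the finitely many parameters in $A$.

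The main obstacle, and the step I would treat most carefully, is the Extension property in the converse direction. Given a non-algebraic $q\in S_1(A)$, I need a realization $a\models q$ with $a\notin\acl(AH(M))$, but $H(M)$ is an infinite (indeed large) set, whereas each instance of axiom scheme 3 only controls $\acl$ relative to the finitely many $H$-parameters appearing in a single $\psi$. The key reduction is that membership in $\acl(AH(M))$ is witnessed by some single algebraic formula $\psi(\overline{a}_0,\overline{h},x)$ over $A$ and a finite tuple $\overline{h}\in H(M)$; the uniform bound $\exists^{<k}x$ in scheme 3 is what allows a single axiom to forbid all choices of $\overline{h}$ from $H$ simultaneously for that $\psi$. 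I would therefore argue that the partial type expressing "$x\models q$ and $x\notin\acl(A\overline{h})$ for every finite $\overline{h}\in H(M)$ and every algebraic $\psi$" is finitely satisfiable by scheme 3, and then invoke $\omega$-saturation to realize it, being careful that each finite subset involves only finitely many formulas $\psi$ and hence is handled by finitely many instances of the extension scheme.
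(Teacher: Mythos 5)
Your proposal is correct and is exactly the routine verification that the paper dismisses with ``Clear'': each axiom scheme of $T^{\mathrm{indep}}$ is the first-order approximation of the corresponding clause in the definition of $H$-structure, and $\omega$-saturation recovers the full clauses from the schemes. You correctly isolate the only delicate point, namely that for the Extension property one must express ``$x\notin\acl(AH(M))$'' by a partial type over the finite set $A$ alone, which works because any witness $\psi(\overline{a},\overline{h},x)$ to algebraicity can be replaced by the uniformly algebraic formula $\psi\wedge\exists^{<k}x\,\psi$, and finitely many such $\psi_i$ can be combined into a single instance of scheme 3 via their disjunction.
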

\begin{proof}  Clear.
\end{proof}

\begin{cor}\label{h3} All $H$-structures are elementarily equivalent and $T^\mathrm{indep}$ is its common complete theory.
\end{cor}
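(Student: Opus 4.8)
The plan is to combine the back-and-forth system of Proposition~\ref{h1} with the two halves of Proposition~\ref{h2}. First I would argue that any two $H$-structures are $L_H$-elementarily equivalent. Given $H$-structures $(M,H(M))$ and $(N,H(N))$, the empty map belongs to the set $I$ of Proposition~\ref{h1}: it is trivially $L$-elementary, and conditions 1 and 2 hold vacuously, condition 2 because $\emptyset\ind_{H(\emptyset)}H(M)$ holds for the empty independent set. Hence $I\neq\emptyset$, and Proposition~\ref{h1} exhibits a genuine back-and-forth system. A nonempty back-and-forth system witnesses $(M,H(M))\equiv_{\infty\omega}(N,H(N))$, and in particular $(M,H(M))\equiv(N,H(N))$. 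Thus all $H$-structures share one and the same complete $L_H$-theory; call it $T^*$.

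Next I would show that $T^\mathrm{indep}$ is complete and equals $T^*$. By the first half of Proposition~\ref{h2}, every $H$-structure models $T^\mathrm{indep}$, so $T^\mathrm{indep}\subseteq T^*$; it remains to see that $T^\mathrm{indep}$ is complete, which then forces $T^\mathrm{indep}=T^*$. To that end, take any two models $N_1,N_2\models T^\mathrm{indep}$ and pass to $\omega$-saturated elementary extensions $N_1'\succeq N_1$ and $N_2'\succeq N_2$. Each $N_i'$ still models $T^\mathrm{indep}$ and is $\omega$-saturated, so by the second half of Proposition~\ref{h2} each $N_i'$ is an $H$-structure. By the previous paragraph $N_1'\equiv N_2'$, and since $N_i\equiv N_i'$ by the elementary extension, we conclude $N_1\equiv N_2$. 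Therefore any two models of $T^\mathrm{indep}$ are elementarily equivalent, so $T^\mathrm{indep}$ is complete.

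Putting the two steps together, $T^\mathrm{indep}$ is a complete theory contained in the common theory $T^*$ of the $H$-structures, hence $T^\mathrm{indep}=T^*$, which is exactly the assertion. The only genuinely delicate points are verifying that the empty map lies in $I$ (so that Proposition~\ref{h1} actually yields elementary equivalence rather than a vacuous statement) and the routine but essential move of passing to $\omega$-saturated elementary extensions so that the second half of Proposition~\ref{h2} can be invoked; everything else is the standard back-and-forth-to-$\equiv_{\infty\omega}$ principle. I do not expect any serious obstacle, as both ingredients are already available from the preceding results.
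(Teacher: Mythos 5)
Your proposal is correct and follows essentially the same route as the paper: back-and-forth equivalence of $H$-structures via Proposition~\ref{h1} (with the empty map in $I$) gives a common complete theory, Proposition~\ref{h2} gives that all $H$-structures satisfy $T^\mathrm{indep}$, and passing to $\omega$-saturated elementary extensions of arbitrary models of $T^\mathrm{indep}$ gives completeness. The paper's proof is just a terser statement of exactly these steps.
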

\begin{proof} On the one hand, all $H$-structures are back-and-forth equivalent and satisfy $T^\mathrm{indep}$. On the other hand, every model of $T^\mathrm{indep}$ has an $\omega$-saturated elementary extension and by Proposition~\ref{h2} this extension is an $H$-structure.
\end{proof}

 We will call $T^\mathrm{indep}$ the theory of H-structures of $T$. If   $(\c, H(\c))$  is the monster model of  $T^\mathrm{indep}$, in order to simplify notation we write  $\HH=H(\c)$.

\begin{lemma} \label{h4}  Let $T$ be a geometric theory,  $\c$ the monster model of $T$ and  $(\c,\HH)$  the monster model of the corresponding theory of $H$-structures. 
\begin{enumerate}
\item Density: for every small set $B\subseteq \c$,   every  non-algebraic $L$-type $p(x)\in S_1(B)$  has a realization in $\HH$.
\item Extension: for every small set $B\subseteq \c$,   every  non-algebraic $L$-type $p(x)\in S_1(B)$ has a realization $b$ in $\c$  such that  $b\not\in \acl(B \HH)$.
\item If $(\c^\prime,\HH^\prime)\equiv (\c,\HH)$ has the  properties 1 and 2, then  $(\c,\HH)\cong (\c^\prime,\HH^\prime)$.
\end{enumerate}
\end{lemma}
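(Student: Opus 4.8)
The plan is to prove each of the three items by reducing them to the abstract machinery already established for $H$-structures, exploiting that $(\c,\HH)$ is the monster model of $T^{\mathrm{indep}}$, hence saturated and strongly homogeneous as an $L_H$-structure. Throughout I would use that saturation gives me realizations of small types and that strong homogeneity lets me extend partial elementary $L_H$-maps to automorphisms.

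\emph{Items 1 and 2 (density and extension for small parameter sets).} The axioms in Definition~\ref{axioms} only express density and extension relative to \emph{finite} tuples of parameters, so the content here is the upgrade from finite to arbitrary small $B$. First I would handle density. Given a non-algebraic $p(x)\in S_1(B)$, I want to realize $p$ inside $\HH$. Consider the partial $L_H$-type $p(x)\cup\{H(x)\}$ over $B$. I claim it is finitely satisfiable in $(\c,\HH)$: any finite fragment involves only finitely many parameters $\overline{c}\subseteq B$, and since $p$ is non-algebraic its restriction to $\overline{c}$ is a non-algebraic $L$-type, so by the density axiom (equivalently, by Proposition~\ref{h2} applied to the $\omega$-saturated monster) it has a realization in $\HH$ satisfying that finite fragment. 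By saturation of $(\c,\HH)$ over the small set $B$, the whole type is realized, giving a realization of $p$ in $\HH$. Item 2 is entirely parallel: I would form the partial $L_H$-type $p(x)\cup\{\neg\theta(x)\mid \theta(x)\in L_H \text{ an ``}\acl(B\HH)\text{-algebraicity'' formula}\}$, more precisely the type asserting $b\models p$ together with the scheme expressing $b\notin\acl(B\HH)$, and check finite satisfiability using the extension axiom over finite parameter tuples, then invoke saturation. The point worth stating carefully is that ``$b\notin\acl(B\HH)$'' is captured by a small set of $L_H$-conditions (for each $L$-formula $\psi(\overline{x},\overline{y},z)$ and each bound $k$, the condition that $b$ is not among the $<k$ solutions of $\psi(\overline{c},\overline{h},z)$ for $\overline{c}\in B$, $\overline{h}\in\HH$), so saturation applies.

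\emph{Item 3 (uniqueness).} Here I would invoke the back-and-forth Proposition~\ref{h1} together with items 1 and 2, but I must first confirm that the hypotheses of Proposition~\ref{h1} are met by $(\c,\HH)$ and $(\c',\HH')$. The subtlety is that Proposition~\ref{h1} as stated is about genuine $H$-structures satisfying the finitary density and extension properties, whereas monster models of $T^{\mathrm{indep}}$ need not literally be $H$-structures in the sense of the Definition (the Definition demands the properties for all finite $A$, which \emph{is} guaranteed by $\omega$-saturation via Proposition~\ref{h2}). Since both $(\c,\HH)$ and $(\c',\HH')$ are monster models they are in particular $\omega$-saturated, hence $H$-structures, so Proposition~\ref{h1} yields a nonempty back-and-forth system $I$ of finite partial $L$-elementary maps respecting $H$ and satisfying the independence condition~2 of that proposition.

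To promote this to a full isomorphism, I would run a standard transfinite back-and-forth over all of $\c$: enumerate $\c$ and $\c'$ in order type their common cardinality, and at each stage extend the current finite map $f\in I$ to include the next element of the domain or codomain. The extension step is exactly the one carried out in the four cases of the proof of Proposition~\ref{h1}, but the realizations demanded there (a realization of a conjugate type lying in $H$, or lying outside $\acl$ of the relevant set) are now supplied, \emph{for small parameter sets}, by items 1 and 2 of the present lemma rather than merely the finitary axioms. I expect the main obstacle to be bookkeeping rather than conceptual: one must check that the independence condition $A\ind_{H(A)}\HH$ is preserved along the whole transfinite construction, since the back-and-forth in Proposition~\ref{h1} only verifies it for finite $A$. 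I would address this by noting that independence in a geometric theory has finite character ($A\ind_B C$ is determined by finite subsets of $A$), so maintaining the condition at every finite stage suffices, and the union of an increasing chain of maps in $I$ remains $L$-elementary and $H$-respecting. Assembling the union of the chain of partial maps then gives the desired isomorphism $(\c,\HH)\cong(\c',\HH')$.
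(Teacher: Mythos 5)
Your proposal is correct and follows essentially the same route as the paper: items 1 and 2 are obtained by writing density and non-algebraicity over $B\HH$ as small partial $L_H$-types over $B$ and invoking saturation of the monster $(\c,\HH)$, and item 3 is the back-and-forth of Proposition~\ref{h1} upgraded to partial maps with small domains, the single-element extension steps now being supplied by items 1 and 2. The only point you spell out that the paper leaves implicit is the finite character of $\ind$ needed to preserve the condition $A\ind_{H(A)}\HH$ along the transfinite chain, which is a correct and harmless elaboration.
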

\begin{proof} Being an $\omega$-saturated model of its own theory, $(\c,\HH)$ is an $H$-structure.  This implies easily \emph{1}.

\emph{2}.  Let $B\subseteq \c$ and let $p(x)\in S_1(B)$  be a non-algebraic $L$-type.  For every finite $B_0\subseteq B$  there is some $b\models p\restriction B_0$ in $\c$  such that $b\not\in \acl(B_0 \HH)$.  There is an $L_H$-type $\pi(x)$ over $B$ such that  $b$  realizes $\pi(x)$  iff  $b\not\in \acl(B\HH)$.  Hence, some realization of $p$  in $\c$  is non-algebraic over $B\HH$.

\emph{3}. Let $I$ be the set of all $L$-elementary mappings $f$   from $\c$ into $\c^\prime$ preserving $H$ and whose domain $A$ and range $B$ are small sets,  $A\ind_{H(A)}\HH$  and  $B\ind_{H(B)}\HH^\prime$.  It is easy to see that $I$ is a back-and-forth system between $(\c,\HH)$  and  $(\c^\prime,\HH^\prime)$.  Therefore we can construct an ascending chain $(f_\alpha\mid \alpha\in \On)$ of mappings $f_\alpha\in I$ such that every element of $\c$  is in the domain of some $f_\alpha$ and every element of $\c^\prime$ is in the range of some $f_\alpha$.  Then $f=\bigcup_{\alpha\in \On} f_\alpha$  is an isomorphism from $(\c,\HH)$  onto  $(\c^\prime,\HH^\prime)$.
\end{proof}

The next proposition shows that $H$-structures of geometric theories provide a good framework to study the problem $o(\HH)=e(\HH)$, since $\HH$ is not $A$-invariant over any small set $A$.

\begin{prop}\label{h5}  Let $T$ be a geometric theory,  $\c$ the monster model of $T$ and  $(\c,\HH)$  the monster model of the theory of $H$-structures.   For any small set $A$,  $o(\HH/A)\neq \{\HH\}$.
\end{prop}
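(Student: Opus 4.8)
The plan is to exhibit an $L$-automorphism $f\in\Aut(\c/A)$ with $f(\HH)\neq\HH$, and the mechanism will be to find a single non-algebraic $L$-type over $A$ that is realized both inside and outside $\HH$. Keep in mind that $\Aut(\c/A)$ consists of $L$-automorphisms of $\c$, which are under no obligation to preserve the predicate $\HH$; this is exactly what gives us room to move it.

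First I would observe that non-algebraic types over $A$ exist. Since $A$ is small, $\acl(A)$ is small, so the non-small $\c$ contains some $c\notin\acl(A)$, and $p(x)=\tp(c/A)\in S_1(A)$ is non-algebraic. Now I apply the two halves of Lemma~\ref{h4} to this $p$. By Density (part~1) there is a realization $a\models p$ with $a\in\HH$. By Extension (part~2) there is a realization $b\models p$ with $b\notin\acl(A\HH)$; since $\HH\subseteq\acl(A\HH)$, this yields $b\notin\HH$. Thus $\tp(a/A)=p=\tp(b/A)$ while $a\in\HH$ and $b\notin\HH$.

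Finally, because $a$ and $b$ have the same $L$-type over $A$, homogeneity of the monster model $\c$ provides some $f\in\Aut(\c/A)$ with $f(a)=b$. Then $b=f(a)\in f(\HH)$ since $a\in\HH$, whereas $b\notin\HH$; hence $f(\HH)\neq\HH$, and therefore $o(\HH/A)\neq\{\HH\}$.

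There is no genuine obstacle here beyond correctly combining the two structural properties: the subtlety worth flagging is that Density alone only populates $\HH$, so one must invoke Extension to secure a second realization of the \emph{same} $L$-type $p$ lying outside $\acl(A\HH)$ (and a fortiori outside $\HH$). That pairing of a witness $a\in\HH$ with a target $b\notin\HH$ of identical $L$-type over $A$ is the entire content of the argument.
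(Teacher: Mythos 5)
Your proof is correct and follows essentially the same route as the paper's: choose a non-algebraic $p\in S_1(A)$, use density and extension from Lemma~\ref{h4} to realize it by some $a\in\HH$ and some $b\notin\acl(A\HH)$, and move $a$ to $b$ by an $A$-automorphism, which then cannot fix $\HH$ setwise. The only additions are the (harmless) explicit justifications that non-algebraic types over $A$ exist and that $b\notin\acl(A\HH)$ implies $b\notin\HH$.
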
 
\begin{proof} Choose a non-algebraic type $p(x)\in S_1(A)$. By the  Lemma~\ref{h4}  there are realizations $a,b$ of $p$ in $\c$ such that $a\in \HH$ and  $b\not\in\acl(A\HH)$. Since $a\equiv_A b$, there is an automorphism $f\in\Aut(\c/A)$ with $f(a)=b$.  Then $f(\HH)\in o(\HH/A)\smallsetminus \{\HH\}$.
\end{proof}

\section{SU-rank one case}

$SU$-rank one theories are supersimple and are geometric. As supersimple theories, they eliminate hyperimaginaries and hence the Independence Theorem holds for arbitrary strong types.
This will be used in the next result. The proof splits into two cases. The first one corresponds to the example of strongly minimal theories, such as  vector spaces. The second one is inspired by the example of the random graph.

\begin{theorem}\label{su1} Let $T$  have $SU$-rank one and let $\c$ be the monster model of $T$. If $(\c,\HH)$  is the monster model of the corresponding theory of $H$-structures, then  $e(\HH)\neq o(\HH)$.
\end{theorem}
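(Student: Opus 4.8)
The plan is to produce a single $\HH'\in e(\HH)$ that fails to be saturated; by item 3 of Remark~\ref{s1} this already gives $e(\HH)\neq o(\HH)$. By Corollary~\ref{h3} the condition $\HH'\in e(\HH)$ is equivalent to $(\c,\HH')\models T^{\mathrm{indep}}$, and by item 3 of Lemma~\ref{h4}, applied with the same underlying $\c$, any model of $T^{\mathrm{indep}}$ that \emph{also} satisfies the Density and Extension clauses 1--2 of that lemma over \emph{all} small sets is isomorphic to $(\c,\HH)$, hence saturated. So it suffices to build an algebraically independent $\HH'\subseteq\c$ satisfying the first-order axioms $T^{\mathrm{indep}}$ (whose Density and Extension schemes only quantify over finite parameter tuples) while \emph{failing} the infinitary Density of Lemma~\ref{h4} over some small set: concretely, I would fix a small $B$ and a non-algebraic $p(x)\in S_1(B)$ and arrange $\HH'\cap p(\c)=\emptyset$. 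Then $p$ is a non-algebraic type over a small set with no realization in $H$, so $(\c,\HH')$ violates clause 1 of Lemma~\ref{h4} and cannot be saturated.

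First I would treat the strongly minimal case, the one modelled on vector spaces. Here over every finite set there is a \emph{unique} non-algebraic $1$-type, so every infinite definable subset of $\c$ is cofinite. Take $B$ to be any countable algebraically independent set and put $\HH'=B$; let $p$ be the unique non-algebraic type over $B$, whose realizations are exactly $\c\smallsetminus\acl(B)$. Since $\HH'=B\subseteq\acl(B)$ we have $\HH'\cap p(\c)=\emptyset$, so $p$ is omitted. Verifying $T^{\mathrm{indep}}$ is routine, and cheap precisely because the type is unique: independence holds by choice of $B$; Density over finite $\overline{c}$ only asks that the cofinite set defined by $\varphi(\overline{c},x)$ meet the infinite set $\HH'$, which it does; and Extension is immediate since $\acl(\overline{c}B)$ is small while each relevant $\varphi(\overline{c},\c)$ is cofinite.

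The general case, modelled on the random graph, is where the work lies and is the step I expect to be the main obstacle. Now there are several (hence, by homogeneity and $\SU$-rank one, many) non-algebraic $1$-types over finite sets, so no small set can satisfy Density and $\HH'$ must have size $|\c|$. I would fix a countable algebraically independent $B=\{b_i\mid i<\omega\}$, let $p\in S_1(B)$ be a non-algebraic type prescribing a fixed ``generic'' relation to each $b_i$ (for the random graph, adjacency to every $b_i$), and build $\HH'$ by transfinite recursion through an enumeration of all finitary Density and Extension requirements, keeping the part built so far algebraically independent and, at each step, choosing the new element off $p(\c)$. For a Density requirement $\exists^\infty x\,\varphi(\overline{c},x)$ with $\overline{c}$ finite, pick $b_j\notin\overline{c}$; the formula obtained from $\varphi(\overline{c},x)$ by adjoining the negation of the $b_j$-part of $p$ still defines an infinite set (this is exactly the point where one checks that $p(\c)$ meets no infinite definable set in a cofinite piece), so it has a realization that can be added to $\HH'$ while preserving independence and staying off $p(\c)$. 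Extension requirements are handled the same way, here using the Independence Theorem for strong types to produce a witness independent from the part of $\HH'$ already built and from the algebraic closure of the current parameters, again avoiding $p(\c)$. The crux is precisely this simultaneous compatibility: that the conditions cutting out $p$ are ``large but co-large'' enough that every finitary Density or Extension demand can be met by an element outside $p(\c)$. Granting it, $(\c,\HH')\models T^{\mathrm{indep}}$ with $\HH'\cap p(\c)=\emptyset$, so $(\c,\HH')$ is a non-saturated member of $e(\HH)$ and $e(\HH)\neq o(\HH)$.
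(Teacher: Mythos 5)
Your overall strategy is the paper's: exhibit some $\HH'\in e(\HH)$ that is not saturated, splitting into a ``small $\HH'$'' case and a ``large $\HH'$ omitting a type over a small set'' case. Your first case (strongly minimal, $\HH'=B$) is correct and is a special instance of the paper's Case 1. The genuine gap is in your second case, and it is twofold. First, the dichotomy ``strongly minimal versus not'' is not the right one. Consider $T$ with infinitely many disjoint infinite $0$-definable unary predicates $P_i$: an $SU$-rank one, non--strongly-minimal geometric theory in which $\acl(X)=X$. There every non-algebraic $1$-type over any set is determined by its restriction to $\emptyset$ together with non-algebraicity, so there is no ``$b_j$-part'' of $p$ whose negation can be adjoined to $\varphi(\overline{c},x)=P_1(x)$ without making it algebraic; your transfinite construction stalls at the very first density requirement. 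The correct splitting, as in the paper, is according to whether every non-algebraic type in $S_1(\acli(\emptyset))$ has a unique global non-algebraic extension (in which case a \emph{small} $\HH'$ realizing each such type $\omega$ times independently is already an $H$-structure --- this subsumes both strongly minimal theories and the example just given) or some $p_\emptyset\in S_1(\acli(\emptyset))$ has two distinct global non-algebraic extensions.

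Second, even under the two-extensions hypothesis, the step you yourself label as the crux is left unproved, and that is where all the content lies. What makes the omission possible is the existence of an independent indiscernible sequence $(\overline{a}_j)_{j<\omega}$ such that over each $\acli(\overline{a}_j)$ the omitted type $p^*$ has a non-algebraic sibling $q_j\neq p^*\restriction\acli(\overline{a}_j)$ with $q_j\supseteq p_\emptyset$. Then, for a density requirement $r_i$ over a finite $B_i$, one either realizes $r_i$ by an element not realizing $p_\emptyset$ (hence not $p^*$), or, when every non-algebraic realization of $r_i$ realizes $p_\emptyset$, picks $\overline{a}_j\ind B_i$ and amalgamates a non-algebraic completion of $r_i$ over $\acli(B_i)$ with $q_j$ via the Independence Theorem for strong types, both being non-forking extensions of $p_\emptyset$. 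Your sketch invokes the Independence Theorem only for the extension requirements, and for density relies on the unestablished claim that $\varphi(\overline{c},x)$ minus the ``$b_j$-part of $p$'' stays infinite; as the example above shows, that claim requires the two-extensions hypothesis and a deliberate choice of the omitted type $p^*$ extending $p_\emptyset$ over $\acli(A_0)$, not merely the failure of strong minimality.
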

\begin{proof}   \emph{Case 1}.   For every  non-algebraic $p(x)\in S_1(\acli(\emptyset))$  there is a unique global non-algebraic type  $\g{p}\in S_1(\c)$  extending $p$  (formally,  $p$ is a subset of the unique extension of $\g{p}$ to a global type in $T^\mathrm{eq}$).  Let  $(p_i\mid i<\kappa)$ enumerate the non-algebraic elements of $S_1(\acli(\emptyset))$ and let us choose inductively a sequence $(a_{ij}\mid j< \omega)$ of  realizations $a_{ij}$  of $p_i$  such that  $a_{ij}\not\in\acl(\{a_{kl}\mid k<i,  l<\omega)\}\cup\{a_{il}\mid l<j\})$.  Let $\HH^\prime =\{a_{ij}\mid i<\kappa, j<\omega\}$.  We claim that $(\c,\HH^\prime)$  is an $H$-structure.  This will imply  that $(\c,\HH)\equiv (\c,\HH^\prime)$.  Since $\HH^\prime$ is a small set, clearly  $(\c,\HH)\not\cong (\c,\HH^\prime)$. Hence, we will obtain $\HH^\prime \in e(\HH)\smallsetminus o(\HH)$, as desired.  Clearly, $\HH^\prime$ is a set of independent elements. Let us consider a non-algebraic $L$-type $p(x)$  over a finite set $B\subseteq \c$.  Let  $p^\prime\in S_1(B\HH^\prime)$  be a non-algebraic $L$-type  extending $p$.  Since $\HH^\prime$ is a small set, we can realize $p^\prime$ in $\c$.  If  $a\models p^\prime$, then $a\models p$ and $a\not\in\acl(B\HH^\prime)$.  Now we check that $p$ can also be realized by some element of $\HH^\prime$.  Extend  $p$  to some non-algebraic type  $p^\prime(x)\in S_1(\acli(B))$  and let $i<\kappa$  be such that  $p_i=p^\prime\restriction \acli(\emptyset)$.  We claim that for some $j<\omega$,  $a_{ij}\not\in\acl( B)$ and therefore realizes  $p$. Otherwise, $\{a_{ij}\mid j<\omega\}\subseteq \acl(B)$, which is a contradiction since  $\acl(B)$ has finite dimension and $\{a_{ij}\mid j<\omega\}$ has infinite dimension.

\emph{Case 2}.  There is a non-algebraic $p_\emptyset(x)\in S_1(\acli(\emptyset))$  having two different global non-algebraic extensions.   We  claim that there is an independent indiscernible sequence $(\overline{a}_i\mid i<\omega)$  where each $\overline{a}_i$ is an independent $n$-tuple  $\overline{a}_i=(a_{i0},\ldots,a_{i n-1})$  and   there are two different non-algebraic types   $p_i,q_i\in S_1(\acli(\overline{a}_i))$  extending $p_\emptyset$. In order to start the construction, we fix first  a finite set $C$ such that $p_\emptyset$ has two different non-algebraic extensions over $\acl(C)$ and then we choose a maximal sequence $c_0,\ldots,c_{n-1}$ of algebraically independent elements of $C$. The construction of the sequence is then straightforward, we start  with an independent $n$-tuple $\overline{a}_0=(a_{00},\ldots,a_{0 n-1})=(c_0,\ldots,c_{n-1})$  and two different non-algebraic types $p_0,q_0\in S_1(\acli(\overline{a}_0))$ extending $p_\emptyset$ and then we obtain an indiscernible independent sequence $(\overline{a}_i\mid i<\omega)$  starting with   $\overline{a}_0$.  Let  $A_0=\{a_{ij}\mid i<\omega,  j<n\}$  and let  $p^\ast(x)\in S_1(\acli(A_0))$ be a non-algebraic extension of $p_\emptyset$.  We plan to construct some $H$-structure $(\c^\prime,\HH^\prime)$  where $\c^\prime\preceq \c$ is a monster model of $T$,  $A_0\subseteq \HH^\prime$  and the type $p^\ast(x)$  is omitted in $\HH^\prime$. Since $\c\cong \c^\prime$, there is some $\HH^{\prime\prime}$  such that  $(\c,\HH^{\prime\prime})\cong (\c^\prime,\HH^\prime)$.   Then  $(\c,\HH^{\prime\prime})\equiv (\c,\HH)$  but  since the type corresponding to $p^\ast(x)$ in the isomorphism $\c^\prime\cong \c$ is omitted in $\HH^{\prime\prime}$, $(\c,\HH^{\prime\prime})\not \cong (\c,\HH)$.  We will obtain $\c^\prime$ as a union of a chain of small sets  $(A_\alpha\mid \alpha\in \On)$  and  $\HH^\prime$ as a union of corresponding subsets $H_\alpha \subseteq A_\alpha$. The chain will start with $H_0=A_0$ and it will satisfy the following conditions:
\begin{enumerate}
\item $A_{\alpha}\ind_{H_\alpha} H_{\alpha +1}$
\item  For every finite $B\subseteq A_\alpha$, every non-algebraic $L$-type  $p(x)\in S_1(B)$  has a realization in $H_{\alpha +1}$.
\item For every $B\subseteq A_\alpha$,  every $L$-type  $p(x)\in S_1(B)$  has a realization $a\in A_{\alpha +1}$  and if  $p$ is non-algebraic, then $a\not\in\acl(BH_{\alpha +1})$.
\item The elements of $H_\alpha$ are independent.
\item No  element of $H_\alpha$ realizes  $p^\ast$.
\end{enumerate}
Using condition 1 one can inductively show that  $A_\alpha\ind_{H_\alpha}H_\beta$  for all $\beta>\alpha$   and hence  that  $A_\alpha\ind_{H_\alpha} \HH^\prime$.  Using condition 3  one sees  that $\c^\prime$ is a monster model of $T$.  We want to show now that $(\c^\prime, \HH^\prime)$ is an $H$-structure. Conditions 2 and 4   are part of what is needed. Consider now a finite $B\subseteq A_\alpha$ and some non-algebraic $L$-type $p(x)\in S_1(B)$. By condition 3  we can get some realization $a\in A_{\alpha +1}$ of  $p$  such that  $a\not \in \acl(B H_{\alpha +1})$. Since  $A_{\alpha +1 }\ind_{H_{\alpha +1}}\HH^\prime$, it follows that $a\not\in \acl(B\HH^\prime)$, as required.

We finally show that the chains $(A_\alpha\mid \alpha\in\On)$  and $(H_\alpha\mid \alpha\in\On)$  exist.  Assume $A_\alpha$  and  $H_\alpha\subseteq A_\alpha$ have been already obtained.  We first construct $H_{\alpha +1}$  and then $A_{\alpha +1}$.  In the limit case we just take unions.  Let  $(r_i(x)\mid i<\kappa)$  enumerate all non-algebraic  $L$-types $r_i\in S_1(B_i)$ over finite subsets $B_i$  of $A_\alpha$.  We will find a realization $b_i$ of  $r_i$  such that  $b_i\not\models p^\ast(x)$ and $b_i\not\in \acl(A_\alpha\cup \{b_j\mid j<i\})$ and we will put  $H_{\alpha  +1}=  H_{\alpha}\cup \{b_i\mid i<\kappa\}$.  This will ensure that the elements of $H_{\alpha +1}$ are independent.  Let $b_i$  realize some non-algebraic extension of $r_i$  over  $\acl(B_i)$.  If  $b_i$ does not realize $p_\emptyset$ we can further assume that $b_i\not\in\acl(A_\alpha\cup\{b_k\mid k<i\})$ and we add it to $H_{\alpha +1}$. In this case it is clear that $b_i$ does not realize $p^\ast$.  Now assume that every realization   of $r_i$ that is not algebraic over $B_i$ realizes $p_\emptyset$.   Since  $\acl(B_i)$ has finite dimension, some tuple $\overline{a}_j$ is disjoint with $\acl(B_i)$.  It follows that $\overline{a}_j\ind B_i$.  There is a non-algebraic  type $q_j
\in S_1(\acli(\overline{a}_j))$  extending $p_\emptyset$ and different from $p^\ast\restriction \acli(\overline{a}_j)$. By the Independence Theorem, we can amalgamate  $q_j$  and  $r_i$  obtaining some common realization $b_i$ of these types such that  $b_i\not\in \acl(B_i \overline{a}_j)$.  We can clearly assume that additionally  $b_i\not\in \acl(A_\alpha\cup \{b_k\mid k< i\})$. We add  this element to $H_{\alpha +1}$.  Note that $b_i$ does not realize $p^\ast$. Note also that the construction satisfies $H_{\alpha +1}\ind_{H_\alpha} A_\alpha$.  Finally, we must extend  $A_\alpha \cup H_{\alpha +1}$ to  $A_{\alpha +1}$ in the following way:  we consider  all $L$-types $r(x)\in S(B)$ over arbitrary subsets $B$ of  $A_\alpha$  and for each such type we add  a realization $b$; moreover,  if the type $r$  is non-algebraic we additionally require that $b\not\in \acl(A_\alpha H_{\alpha +1})$. 
\end{proof}

\section{o-minimal case}

Now we consider o-minimal theories, another example of geometric theories.  By an o-minimal theory we understand here the theory of a densely ordered o-minimal structure. We will use constantly the fact that in any o-minimal theory,  $\dcl(A)=\acl(A)$ for any set $A$.  We will see that if $(\c,\HH)$ is the monster model of the theory of $H$-structures of models of an o-minimal theory, the equality $e(\HH) = o(\HH)$ holds in some cases and is false in some other cases, depending
on the complexity of the definable closure.

\begin{prop}\label{o0}Let $T$ be an o-minimal theory with a $0$-definable binary function $f$ such that
\begin{enumerate}
\item For every $x$ and $y$,  $f(x,y)$  is interdefinable with $y$ over $x$ and  with $x$ over $y$.
\item For every interval $(a,b)$  there is another interval $(a^\prime,b^\prime)$  such that if $x,y\in (a^\prime,b^\prime)$, then  $f(x,y)\in (a,b)$.
\end{enumerate}
Let $(\c,\HH)$ be the monster model of the theory of $H$-structures of $T$. If we extend $\HH$ to a basis $\HH^\prime$ of $\c$, then $(\c,\HH)\equiv (\c,\HH^\prime)$and  $(\c,\HH)\not\cong (\c,\HH^\prime)$ . Hence  $o(\HH)\neq e(\HH)$.
\end{prop}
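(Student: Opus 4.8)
The plan is to derive the elementary equivalence from completeness of $T^{\mathrm{indep}}$ (Corollary~\ref{h3}) by showing that $(\c,\HH^\prime)$ is itself a model of $T^{\mathrm{indep}}$, and to get the non-isomorphism from a crude algebraic-closure count. For the latter: any $L_H$-isomorphism $(\c,\HH)\cong(\c,\HH^\prime)$ restricts to an $L$-automorphism $\sigma$ of $\c$ with $\sigma(\HH)=\HH^\prime$, hence $\sigma(\acl(\HH))=\acl(\HH^\prime)=\c$; since $\sigma$ is a bijection this forces $\acl(\HH)=\c$, contradicting the extension property of the $H$-structure $(\c,\HH)$ (Lemma~\ref{h4}, item 2), which produces elements outside $\acl(\HH)$. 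Thus, once the equivalence is established, $\HH^\prime\in e(\HH)\smallsetminus o(\HH)$, giving $o(\HH)\neq e(\HH)$.

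Among the three axiom schemes of $T^{\mathrm{indep}}$ for $(\c,\HH^\prime)$, independence is immediate because $\HH^\prime$ is a basis, and density is immediate because $\HH\subseteq\HH^\prime$ and $(\c,\HH)$ already satisfies it. The whole difficulty is the extension scheme, which looks hopeless at first because $\acl(\HH^\prime)=\c$, so \emph{every} element is algebraic over $\HH^\prime$. The key observation is that a single instance of the scheme refers to one formula $\psi(\overline{x},\overline{y},x)$ with a fixed number $m$ of $H$-variables and a fixed fibre bound $k$. Fixing parameters $\overline{a}$, o-minimal definable choice gives functions $g_1,\dots,g_k$ with $\psi(\overline{a},\overline{h},x)\rightarrow\bigvee_i x=g_i(\overline{a},\overline{h})$, so any element $\psi$-captured by $\HH^\prime$ over $\overline{a}$ lies in $\acl(\overline{a}\cup S)$ for some $S\subseteq\HH^\prime$ with $|S|\leq m$. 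Choosing a finite $E_0\subseteq\HH^\prime$ with $\overline{a}\in\acl(E_0)$ (possible since $\HH^\prime$ spans), every captured element lies in $\bigcup\{\acl(F)\mid F\subseteq\HH^\prime,\ |F|\leq N\}$, where $N=|E_0|+m$. Hence it suffices to realize the target non-algebraic formula by some $b$ that is not algebraic over any $N$ elements of $\HH^\prime$.

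Here is where $f$ enters, and this is the step I expect to be the main obstacle to execute cleanly. The target formula defines, by o-minimality, an interval $(c,d)$; iterating Condition~2 produces intervals $I_0=(c,d),I_1,\dots,I_N$ with $f(I_j\times I_j)\subseteq I_{j-1}$ for $1\leq j\leq N$, and, using that $\HH^\prime\supseteq\HH$ is dense, I would pick distinct $h_1,\dots,h_{N+1}\in\HH^\prime$ in the appropriate $I_j$ so that the iterated value $b=f(h_1,f(h_2,\dots f(h_N,h_{N+1})\dots))$ lands in $(c,d)$. Condition~1 then lets me invert $f$ one layer at a time to show $h_j\in\dcl(\{h_i\mid i\neq j\}\cup\{b\})$ for every $j$. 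Consequently, if $b\in\acl(F)$ with $F\subseteq\HH^\prime$ and $|F|\leq N$, then $h_j\in\acl(\{h_i\mid i\neq j\}\cup F)$ for each $j$; since $\HH^\prime$ is algebraically independent and $h_j\notin\{h_i\mid i\neq j\}$, this forces $h_j\in F$ for all $j$, whence $|F|\geq N+1$, a contradiction. Thus $b$ avoids the captured set and the extension scheme holds.

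This yields $(\c,\HH^\prime)\models T^{\mathrm{indep}}$, hence $(\c,\HH)\equiv(\c,\HH^\prime)$, completing the argument. The only genuinely delicate points are the bookkeeping that the iterated $f$-value can be steered into $(c,d)$ through Condition~2 while keeping the $h_i$ distinct, and the verification via Condition~1 that each $h_j$ is recoverable from the remaining $h_i$ together with $b$; everything else is a dimension count against the algebraic independence of $\HH^\prime$. It is worth noting that this is exactly where the hypothesis on $f$ is needed: without such a function one cannot manufacture, inside a prescribed interval, an element that genuinely depends on arbitrarily many basis elements, which is what defeats the extension scheme for a plain basis.
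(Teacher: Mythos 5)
Your proposal is correct and follows essentially the same route as the paper: independence and density are immediate, and the extension scheme is verified by feeding $N+1$ distinct basis elements (where $N$ bounds the size of any potential ``capturing'' subset of $\HH'$, namely $m$ plus the number of basis elements needed to define the parameters) into an iterate of $f$ steered into the target interval via Condition~2, then using Condition~1 plus the algebraic independence of $\HH'$ and a pigeonhole count to conclude the resulting element escapes $\acl$ of the parameters together with any $m$-tuple from $\HH'$. The two steps you flag as delicate are carried out in the paper exactly as you sketch them, so there is no gap.
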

\begin{proof} Note that $\HH$ is not a basis since any non-algebraic type over the empty set has a realization in $\c\smallsetminus \dcl(\HH)$. Hence $(\c,\HH)\not\cong (\c,\HH^\prime)$.
We will now check that $(\c,\HH^\prime)$ satisfies the axioms of the theory of $H$-structures as presented in Definition~\ref{axioms}.  This will imply $(\c,\HH)\equiv(\c,\HH^\prime)$. Clearly $\HH^\prime$ is a collection of independent elements and clearly every non-algebraic $L$-type $p(x)\in S_1(A)$ over a finite set $A\subseteq \c$ can be realized in $\HH^\prime$ (since it can be realized in  $\HH$ and $\HH\subseteq \HH^\prime$). Now we will check the extension  axioms.   Asume $a_1,\ldots,a_n\in \c$ and $\varphi(a_1,\ldots,a_n,\c)$ is infinite and therefore contains some interval $(b_1,b_2)$.   Let $\psi(a_1,\ldots,a_n,y_1,\ldots,y_m,x)$ be algebraic in $x$ for all $y_1,\ldots,y_m$.  We want to show that there is some $a$ in $(b_1,b_2)$ such that $\c\models \neg\psi(a_1,\ldots,a_n,h_1,\ldots,h_m,a)$  for all $h_1,\ldots,h_m\in\HH^\prime$.   Since $a_1,\ldots,a_n$ are definable over $\HH^\prime$ we may choose $h^\prime_1,\ldots,h^\prime_k\in \HH^\prime$  such that all $a_1,\ldots,a_n$ are definable over $h^\prime_1,\ldots,h^\prime_k$.  Now consider the $0$-definable function $f_{m+k+1}$ defined by iteration of $f$ as  $f_{m+k+1}(x_1,\ldots,x_{m+k+1})= f(\ldots f(f(x_1,x_2),x_3),\ldots,x_{m+k+1})$.  Clearly, every $x_i$ is interdefinable with $f_{m+k+1}(x_1,\ldots,x_{m+k+1})$ over the rest of $x_j$ and there are intervals $(c_i,d_i)$  such that  $f_{m+k+1}(x_1,\ldots,x_{m+k+1})\in (b_1,b_2)$  whenever $x_1,x_2\in (c_1,d_1),x_3\in (c_2,d_2)\ldots,x_{m+k+1}\in (c_{m+k},d_{m+k}) $.  Choose  $h_1^{\prime\prime},\ldots,h^{\prime\prime}_{m+k+1}$,  different elements of $\HH^\prime$,  in the appropriate intervals  in such a way that  $a= f_{m+k+1}(h^{\prime\prime}_1,\ldots ,h^{\prime\prime}_{m+k+1})\in (b_1,b_2)$. 
We check that $a$ satisfies the requirements. Let $h_1,\ldots,h_m\in\HH^\prime$. Notice that for some $i$, $h^{\prime\prime}_i\not \in \{h_1,\ldots,h_m,h_1^\prime,\ldots,h_k^\prime\}$, which implies that $h^{\prime\prime}_i$ is not algebraic over $A=\{h_1,\ldots,h_m,h_1^\prime,\ldots,h_k^\prime\}\cup\{h^{\prime\prime}_j\mid j\neq i\}$.  Since   $a$ is interdefinable with $h^{\prime\prime}_i$ over $A$, $a\not \in \acl(h^\prime_1,\ldots,h^\prime_k,h_1,\ldots,h_m)$  and therefore $a\not\in\acl(a_1,\ldots,a_n,h_1,\ldots,h_m)$ and $\c\models\neg\psi(a_1,\ldots,a_n,h_1,\ldots,h_m,a)$.
\end{proof}

\begin{cor}\label{o1}Let $T$ be the theory of an o-minimal expansion of the ordered additive  group of the real numbers. If   $(\c,\HH)$ is the monster model of the theory of $H$-structures of $T$, then  $o(\HH)\neq e(\HH)$.
\end{cor}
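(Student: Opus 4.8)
The plan is to obtain Corollary~\ref{o1} as an immediate instance of Proposition~\ref{o0}: all I need is to produce a $0$-definable binary function $f$ on $\c$ satisfying conditions 1 and 2 of that proposition. Since $T$ expands the ordered additive group of the reals, addition is part of the language, so the obvious candidate is the group operation itself, $f(x,y)=x+y$, which is $0$-definable. Once this function is shown to satisfy the two hypotheses, Proposition~\ref{o0} applies verbatim.

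For condition 1, I would use that the additive group provides a $0$-definable inverse and hence a $0$-definable subtraction. Fixing $x$, the map $y\mapsto x+y$ is a bijection whose inverse $z\mapsto z-x$ is definable over $x$, so $y$ and $f(x,y)$ are interdefinable over $x$; by symmetry the same holds for $x$ over $y$, and trivially $f(x,y)\in\dcl(x,y)$. This gives condition 1 with no further work.

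For condition 2, given an interval $(a,b)$ I want an interval $(a^\prime,b^\prime)$ with $x+y\in(a,b)$ whenever $x,y\in(a^\prime,b^\prime)$. Here I would observe that the additive group of the reals is divisible and that divisibility by each fixed $n$ is a first-order property, so it transfers to $\c$ by elementarity; in particular halving is available in $\c$. Then it suffices to take $(a^\prime,b^\prime)=(a/2,b/2)$, which is a genuine interval since $a<b$ forces $a/2<b/2$ and the order is dense: for $x,y\in(a/2,b/2)$ one has $a=2\cdot(a/2)<x+y<2\cdot(b/2)=b$, as required.

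The only point requiring any care --- the \emph{main obstacle}, such as it is --- is the passage to the monster model, namely ensuring that halving is legitimate in $\c$ rather than merely in $\mathbb{R}$; this is exactly where the elementarity of the divisibility axioms is invoked. Everything else is routine. With conditions 1 and 2 verified, Proposition~\ref{o0} yields, for any extension $\HH^\prime$ of $\HH$ to a basis of $\c$, that $(\c,\HH)\equiv(\c,\HH^\prime)$ while $(\c,\HH)\not\cong(\c,\HH^\prime)$, and therefore $o(\HH)\neq e(\HH)$.
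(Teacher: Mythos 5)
Your proof is correct and is exactly the paper's argument: the paper's entire proof of Corollary~\ref{o1} is ``By Proposition~\ref{o0} applied to $f(x,y)=x+y$.'' You have merely written out the routine verification of conditions 1 and 2 (via definable subtraction and divisibility transferring to $\c$ by elementarity) that the paper leaves implicit.
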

\begin{proof} By Proposition~\ref{o0}  applied to  $f(x,y)=x+y$.
\end{proof}

The fact that $(\c,\HH)\equiv (\c,\HH^\prime)$ (where $(\c,\HH)$ is the monster model of $T^\mathrm{indep}$ and $\HH^\prime\supseteq \HH$ is a basis, as in Proposition~\ref{o0}) in the case of  the theory of real-closed fields has been stated, without proof, in Example 2.16 of~\cite{BerensteinVassiliev12}. We thank A. Berenstein for some helpful conversation on these topics.

\begin{prop}\label{o3}  Let $T$ be  an o-minimal theory and assume there are $0$-definable unary functions $f_1,\ldots,f_k$ such that for every set $A$,  $\dcl(A)= f_1(A)\cup\ldots\cup f_k(A)\cup\dcl(\emptyset)$.  If $(\c,\HH)$ is the monster model of the theory $T^\mathrm{indep}$ of $H$-structures of $T$, then $o(\HH)=e(\HH)$.
\end{prop}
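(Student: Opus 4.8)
The plan is to show that every $\HH'\in e(\HH)$ in fact lies in $o(\HH)$; since the reverse inclusion $o(\HH)\subseteq e(\HH)$ is trivial, this gives the desired equality. Fix such an $\HH'$, so that $(\c,\HH')\equiv(\c,\HH)$ and hence $(\c,\HH')\models T^{\mathrm{indep}}$. In particular $\HH'$ is algebraically independent and the density and extension axioms of Definition~\ref{axioms} hold in $(\c,\HH')$, while $\c$ itself remains $L$-saturated. By item 3 of Lemma~\ref{h4} it suffices to verify that $(\c,\HH')$ satisfies the two strong properties of that lemma, namely density and extension over \emph{arbitrary small} parameter sets $B$. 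Once these hold, Lemma~\ref{h4} applied with $\c'=\c$ yields an $L$-automorphism of $\c$ sending $\HH$ onto $\HH'$, so $\HH'\in o(\HH)$.

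The first step is purely order-theoretic and uses only o-minimality. For any small $B$ and any non-algebraic $p(x)\in S_1(B)$, the set $R$ of its realizations in $\c$ is convex (two elements realize the same type over $B$ iff they determine the same cut in $\dcl(B)$, which is a convex condition) and infinite. In a dense order a convex infinite set contains an open interval $(b_1,b_2)$ with endpoints $b_1,b_2\in\c$; so $R$ is never ``infinitesimally thin'' and always contains a genuine interval definable over two parameters of $\c$. This is exactly what lets me replace the small set $B$ by finitely many parameters when invoking the finitary axioms of $T^{\mathrm{indep}}$.

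Density is then immediate: applying the density axiom (Definition~\ref{axioms}(2)) to $\varphi(b_1,b_2,x)\equiv b_1<x<b_2$, which has infinitely many solutions, produces some $h\in\HH'$ with $b_1<h<b_2$, whence $h\in R$ and $h\models p$. For extension I would use the hypothesis on $\dcl$ in an essential way. Since $T$ is o-minimal we have $\acl=\dcl$, so the hypothesis gives $\acl(B\HH')=\acl(B)\cup\bigcup_{j\le k}f_j(\HH')$. The crucial point is that membership in the ``new'' part of this set is controlled by a \emph{single} element of $\HH'$: the formula $\psi(y,x)\equiv\bigvee_{j\le k}\bigl(x=f_j(y)\bigr)$ is algebraic in $x$ (at most $k$ solutions for each $y$), and $x\in\bigcup_j f_j(\HH')$ holds iff $(\exists y\in H)\,\psi(y,x)$. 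Applying the extension axiom (Definition~\ref{axioms}(3)) with parameters $b_1,b_2$, the formula $\varphi(b_1,b_2,x)\equiv b_1<x<b_2$, and this $\psi$, I obtain some $a$ with $b_1<a<b_2$ and $(\forall y\in H)\neg\psi(y,a)$. Then $a\in R$ realizes $p$, we have $a\notin\acl(B)$ because $p$ is non-algebraic, and $a\neq f_j(h)$ for every $h\in\HH'$ and $j\le k$, so $a\notin\acl(B\HH')$, as required.

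I expect the extension step to be the conceptual heart of the argument, and it is precisely here that the \emph{shape} of the $\dcl$ hypothesis is decisive. Because each function is unary, avoiding all of $\acl(B\HH')$ reduces to a condition involving a single $H$-quantifier, which the finitary extension axiom can fulfil simultaneously for every element of $\HH'$. This is exactly what breaks down in Proposition~\ref{o0}: with a binary $f$ one may take $\HH'$ to be a basis, so that $\dcl(\HH')=\c$ and no element at all avoids $\acl(B\HH')$; the new algebraic elements arise from \emph{pairs} from $\HH'$ and cannot be excluded by a single-variable $H$-condition. Thus the main obstacle is not a calculation but the recognition that the unary, finitely-generated form of $\dcl$ is exactly the feature that upgrades the finitary extension axiom of $T^{\mathrm{indep}}$ to the genuine extension property over small sets needed to apply Lemma~\ref{h4}.
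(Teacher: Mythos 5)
Your proof is correct and follows essentially the same route as the paper's: reduce to the density and extension properties of Lemma~\ref{h4} over small sets, use the o-minimal cut description of non-algebraic $1$-types to trap the realizations in an interval $(b_1,b_2)$ with endpoints in $\c$, and then invoke the finitary density and extension axioms of $T^{\mathrm{indep}}$ (the latter with $\psi(y,x)\equiv\bigvee_j x=f_j(y)$, which is exactly the sentence the paper transfers by elementary equivalence) together with $\dcl(B\HH')=\dcl(B)\cup\bigcup_j f_j(\HH')$. The only cosmetic difference is that you work over $\dcl(B)$ for arbitrary small $B$ where the paper first reduces to elementary submodels $M\preceq\c$; your closing discussion of why the unary hypothesis is what makes the argument work is also accurate.
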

\begin{proof} Let $(\c,\HH)\equiv (\c,\HH^\prime)$.  We will use Lemma~\ref{h4}  to prove that $(\c,\HH)\cong (\c,\HH^\prime)$. It is enough to check properties 1 and 2 for the case of a model $B=M\preceq\c$. Assume then $p(x)\in S_1(M)$ is non-algebraic.  By o-minimality (see for instance Theorem 3.3 in~\cite{Pill-Ste86}), the type $p(x)$  determines a cut  $M=B_1\cup B_2$ with $B_1 <B_2$  such that every $a\in \c$ satisfying  $B_1<a<B_2$ realizes $p(x)$.  Since $B_1,B_2$ are small, we can find $a_1,a_2\in \c$  such that  $B_1<a_1<a_2<B_2$. By density of $\HH$  there is some $a\in \HH$ such that $a_1<a<a_2$. But this is expressable in first order and we have that $(\c,\HH) \equiv (\c, \HH^\prime)$,  therefore we also find $a^\prime \in \HH^\prime$ such that $a_1 < a^\prime < a_2$ and hence $a^\prime$ realizes $p(x)$. Now we check property 2.  We use the same notation for the cut determined by $p(x)$  and again we choose $a_1,a_2$ as above. Notice that   $\dcl(M)=M$ and $M\cap (a_1,a_2)=\emptyset$.   By definition of $H$-structure,  in every interval there is some element $a$ such that $a\not\in\dcl(\HH)$. The sentence $$\forall x_1 x_2(x_1<x_2\rightarrow \exists x( x_1<x<x_2 \wedge (\forall y\in H) \bigwedge_{i=1}^k f_i(y)\neq x))$$ holds  in $(\c,\HH)$ and in $(\c,\HH^\prime)$. Hence there is some $a\in (a_1,a_2)$  such that $a\not\in f_1(\HH^\prime)\cup\ldots\cup f_k(\HH^\prime)$. It follows that $a\not\in\dcl(M\HH^\prime)$.  
\end{proof}

\section{Lovely pairs}

Again we consider a geometric theory $T$. Lovely pairs are a generalization of B. Poizat's belle paires studied by I. Ben-Yaacov, A. Pillay and E. Vassiliev (see~\cite{Ben-YaacovPillayVassiliev02}) in the context of simple theories.  A.~Berenstein and E. Vassiliev have adapted in~\cite{BerensteinVassiliev10} the notion of lovely pair to the framework of geometric theories.  This generalizes also L.~van den Dries's  theory of dense pairs of o-minimal expansions of the ordered additive group of real numbers (see~\cite{Dri98})   As in the case of $H$-structures, we recapitulate the basic facts offering short proofs when convenient.  To follow the conventions, we consider now a new unary predicate  $P$  instead of $H$, but the notation $P(A)=A\cap P(M)$, etc.  is similar. Our purpose is to analyse the validity of $e(\PP)=o(\PP)$  when $(\c,\PP)$ is the monster model of the theory of lovely  pairs of $T$.

\begin{defi}\rm  A  \emph{lovely pair}  is a structure  $(M,P(M))$  of language  $L_P=L\cup\{P\}$  such that   $P(M)\preceq  M$  (in $L$)  and
\begin{enumerate}
\item  Coheir: If $A\subseteq M$ is finite  and $q(x)\in S_1(A)$ is a non-algebraic $L$-type, then it has some realization  $a\models q$ in $P(M)$.
\item Extension:  If $A\subseteq M$ is finite  and $q(x)\in S_1(A)$ is a non-algebraic $L$-type, then it has some realization  $a\models q$  in $M$ such that $a\not\in \acl(AP(M))$.
\end{enumerate}
\end{defi}

\begin{prop}\label{l1}  Let $(M,P(M))$ and $(N,P(N))$  be lovely pairs of $T$ and let $I$ be the set of all   $L$-elementary mappings $f$ from $M$ into $N$  with finite domain $A=\dom(f)$   in $M$ and range $B=\rng(f)$ in $N$ and such that  
\begin{enumerate}
\item $a\in P(M)$  iff  $f(a)\in P(N)$  for all $a\in A$
\item $A\ind_{P(A)} P(M)$  and $B\ind_{P(B)}P(N)$.
\end{enumerate}
Then $(M,P(M))$ and $(N,P(N))$  are partially isomorphic via $I$.
\end{prop}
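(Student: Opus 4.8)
The plan is to run the back-and-forth argument of Proposition~\ref{h1} almost verbatim, changing only the two places where the combinatorics of a set of independent elements $H(M)$ must be replaced by the combinatorics of an elementary substructure $P(M)\preceq M$. Since the definition of $I$ is symmetric in $M$ and $N$, it suffices to prove the forth property: given $f\in I$ with $\dom(f)=A$ and $\rng(f)=B$, and given $a\in M$, I would extend $f$ to some $f'\in I$ with $a\in\dom(f')$. As in Proposition~\ref{h1} I split into four cases according to the position of $a$ relative to $\acl(A)$ and $P(M)$, namely \emph{Case 1}: $a\in\acl(A)$; \emph{Case 2}: $a\in P(M)\setminus\acl(A)$; \emph{Case 3}: $a\in\acl(AP(M))\setminus\acl(A)$; and \emph{Case 4}: $a\notin\acl(AP(M))$.

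In each case I choose $b\in N$ realizing the conjugate type $p^f$ of $p=\tp(a/A)$, set $f'=f\cup\{(a,b)\}$, and verify three things: that $f'$ is $L$-elementary (immediate from $b\models p^f$), that $f'$ preserves $P$ (condition~1), and that $f'$ satisfies the independence condition~2. The realization $b$ is produced by the algebraic type itself in \emph{Case 1}, by the Coheir axiom (inside $P(N)$) in \emph{Case 2}, and by the Extension axiom (with $b\notin\acl(BP(N))$) in \emph{Case 4}. \emph{Case 3} is then routine: I first pull a finite tuple $\overline{h}\in P(M)$ with $a\in\acl(A\overline{h})$ into the domain by finitely many applications of \emph{Cases 1} and~\emph{2}, reducing to \emph{Case 1} for $a$ itself.

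The one genuine departure from Proposition~\ref{h1}, and the step I expect to be the main obstacle, is \emph{Case 1}, where I must decide whether an algebraic element lies in $P$. In the $H$-structure proof the independence of $H(M)$ forced any $a\in\acl(A)\cap H(M)$ to lie in $H(A)$; that argument is unavailable here. Instead I would use that $P(M)\preceq M$ gives $\acl(P(A))\subseteq P(M)$, and combine this with the hypothesis $A\ind_{P(A)}P(M)$ to obtain, for $a\in\acl(A)$, the equivalence $a\in P(M)$ iff $a\in\acl(P(A))$. Since $f$ is $L$-elementary and carries $P(A)$ onto $P(B)$, the condition $a\in\acl(P(A))$ transports to $b\in\acl(P(B))$, so $a\in P(M)$ iff $b\in P(N)$, which is condition~1. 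The independence condition~2 then follows because an element algebraic over $A$ changes neither side of $A\ind_{P(A)}P(M)$: when $a\in P(M)$ one has $a\in\acl(P(A))$, so $\acl(P(Aa))=\acl(P(A))$, and when $a\notin P(M)$ one has $P(Aa)=P(A)$.

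The remaining verifications are short dimension computations that I would only sketch. In \emph{Case 2}, once $b\in P(N)$ realizes $p^f$, I have $P(Bb)=P(B)b$, and moving $b$ from $P(N)$ into the base of $B\ind_{P(B)}P(N)$ gives $Bb\ind_{P(Bb)}P(N)$. In \emph{Case 4}, $a\notin\acl(AP(M))$ forces $a\notin P(M)$, hence $P(Aa)=P(A)$, and the computation $\dim(Aa/P(A))=\dim(A/P(A))+1=\dim(Aa/P(M))$ yields $Aa\ind_{P(A)}P(M)$; symmetric computations handle the range side throughout. Having established the forth property and, by symmetry, its mirror, $I$ is a back-and-forth system, which is the desired conclusion.
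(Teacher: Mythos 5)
Your proposal is correct and follows exactly the route the paper takes: its proof of Proposition~\ref{l1} is just the remark ``Similar to the proof of Proposition~\ref{h1}.'' You have in fact supplied the one detail the paper leaves implicit, namely that in Case~1 the independence of $H(M)$ must be replaced by $\acl(P(A))\subseteq P(M)$ (from $P(M)\preceq M$) together with $A\ind_{P(A)}P(M)$ to get $a\in P(M)$ iff $a\in\acl(P(A))$, and that adaptation is exactly right.
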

\begin{proof} Similar to the proof of Proposition~\ref{h1}.
\end{proof}

\begin{defi}\rm Let  $T_{P}$ be the theory given by the following axioms:
\begin{enumerate}
\item  $P$  is an elementary substructure:  $(\forall x_1\in P)\ldots (\forall x_n\in P)(\exists x\varphi(x_1,\ldots,x_n,x)\rightarrow (\exists x\in P) \varphi(x_1,\ldots,x_n,x))$\\for every $\varphi(x_1,\ldots,x_n,x)\in L$, for every $n$.
\item Density: $\forall x_1\ldots  x_n ( \exists^\infty x  \varphi(x_1,\ldots,x_n,x)  \rightarrow (\exists x\in P)\varphi(x_1,\ldots,x_n,x))$\\ for every $\varphi(x_1,\ldots,x_n,x)\in L$, for every $n$.
\item Extension: $ \forall x_1\ldots x_n(\forall  y_1\ldots y_m\exists^{<k}x\,\psi(x_1,\ldots,x_n,y_1,\ldots,y_m,x)  \wedge  \exists^\infty x  \varphi(x_1,\ldots,x_n,x) $ \\$ \rightarrow \exists x( \varphi(x_1,\ldots,x_n,x)\wedge (\forall y_1\in P)\ldots (\forall y_m\in P)\neg \psi(x_1,\ldots,x_n,y_1,\ldots,y_m,x))) $
\\ for every $\varphi(x_1,\ldots,x_n,x)\in L$,  $\psi(x_1,\ldots,x_n,y_1,\ldots,y_m,x)\in L$, and  $n,m,k$.
\end{enumerate}
\end{defi}

\begin{prop}\label{l2}  All  lovely pairs are models of $T_{P}$ and  any $\omega$-saturated model of  $T_{P}$  is a  lovely pair.
\end{prop}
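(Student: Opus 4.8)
The plan is to prove Proposition~\ref{l2} in direct analogy with the proof of Proposition~\ref{h2} for $H$-structures. The statement has two halves, and the first half is essentially a routine check. Every lovely pair $(M,P(M))$ satisfies the three axiom schemas of $T_P$: axiom~1 holds because the definition of a lovely pair explicitly requires $P(M)\preceq M$ in $L$, which is precisely the Tarski--Vaught condition encoded by the first schema; axioms~2 and~3 are the first-order renderings of the Coheir (density) and Extension properties in the definition, using the elimination of $\exists^\infty$ available in the geometric theory $T$ to replace ``non-algebraic type'' by ``$\exists^\infty x\,\varphi$''. So for this direction I would simply observe that each axiom is a first-order transcription of a defining clause and cite the $\exists^\infty$-elimination to justify passing from the semantic formulation (non-algebraic $L$-types) to the syntactic one (infinitely many solutions of a single formula).

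The substantive direction is the converse: that any $\omega$-saturated model $(N,P(N))\models T_P$ is a lovely pair. First I would verify $P(N)\preceq N$. Axiom~1 gives the Tarski--Vaught test, so $P(N)$ is an elementary substructure of $N$ in $L$; this requires no saturation. Next I would verify the Coheir property. Given a finite $A\subseteq N$ and a non-algebraic $q(x)\in S_1(A)$, I want a realization in $P(N)$. For each formula $\varphi(\overline{a},x)\in q$, non-algebraicity means $\varphi(\overline{a},x)$ has infinitely many solutions, so by axiom~2 there is an element of $P(N)$ satisfying it; by $\omega$-saturation one can realize the whole type $q$ inside $P(N)$, since the family $\{\varphi(\overline{a},x)\wedge P(x)\mid \varphi\in q\}$ is finitely satisfiable (each finite conjunction is again non-algebraic over $A$, hence has a solution in $P(N)$ by axiom~2). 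Finally, for the Extension property I would use axiom~3 in the same spirit: given non-algebraic $q\in S_1(A)$, I need a realization avoiding $\acl(A\,P(N))$, i.e.\ avoiding every algebraic formula $\psi(\overline{a},\overline{h},x)$ with parameters $\overline{h}$ from $P(N)$. Axiom~3 says that for any single algebraic-in-$x$ formula $\psi$ and any non-algebraic $\varphi$, there is a solution of $\varphi$ avoiding $\psi(\overline{a},\overline{y},x)$ for all tuples $\overline{y}\in P$; combining this across the formulas of $q$ and all relevant $\psi$ by $\omega$-saturation yields the desired realization.

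I expect the main obstacle to lie in the Extension step, specifically in the compactness bookkeeping. The requirement $a\not\in\acl(A\,P(N))$ is a conjunction over \emph{all} algebraic formulas $\psi$ and \emph{all} parameter tuples from $P(N)$, whereas axiom~3 handles one $\psi$ at a time, universally quantified over the $P$-parameters. The point to get right is that a finite subtype picks out finitely many formulas $\varphi_1,\dots,\varphi_r$ from $q$ and finitely many algebraic formulas $\psi_1,\dots,\psi_s$; one forms the single non-algebraic formula $\varphi=\bigwedge_i\varphi_i$ and applies axiom~3 successively (or to a suitable combined $\psi$) to find a solution of $\varphi$ avoiding each $\psi_j(\overline{a},\overline{y},x)$ for all $\overline{y}\in P(N)$. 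Then $\omega$-saturation assembles these finite approximations into a genuine realization of $q$ lying outside $\acl(A\,P(N))$. Since the whole argument mirrors Proposition~\ref{h2}, where the analogous claim is dismissed as ``Clear'', I would likewise keep the write-up brief, noting only that the axioms are the first-order forms of the defining properties and that $\omega$-saturation upgrades the single-formula density and extension axioms to the full type-theoretic Coheir and Extension conditions.
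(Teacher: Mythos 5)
Your argument is correct and is exactly the verification the paper has in mind: the paper's own proof of Proposition~\ref{l2} is the single word ``Clear'' (as for Proposition~\ref{h2}), and your write-up supplies precisely the routine details --- Tarski--Vaught for axiom~1, finite satisfiability of $\{\varphi(\overline{a},x)\wedge P(x)\mid\varphi\in q\}$ plus $\omega$-saturation for Coheir, and the combined algebraic formula (a disjunction $\bigvee_j\psi_j$, rather than ``successive'' applications, which would not obviously produce a single witness) for Extension. The only micro-detail worth recording is that a formula $\psi$ that is algebraic only for the particular parameters at hand should be replaced by $\psi\wedge\exists^{<k}x\,\psi$, so that the hypothesis of axiom~3 (algebraicity in $x$ for \emph{all} parameter values) is satisfied.
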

\begin{proof} Clear.
\end{proof}

\begin{remark}\label{l3}  If  $(M,H(M))$ is an $H$-structure  and  $P(M)=\acl(H(M))$,  then  $(M,P(M))$  is a lovely pair.
\end{remark}
\begin{proof} See Proposition  4.1 in~\cite{BerensteinVassiliev12}.
\end{proof}

\begin{lemma} \label{l4}  Let $T$ be a geometric theory,  $\c$ the monster model of $T$ and  $(\c,\PP)$  the monster model of the corresponding theory of lovely pairs. 
\begin{enumerate}
\item Density: for every small set $B\subseteq \c$,   every  non-algebraic $L$-type $p(x)\in S_1(B)$  has a realization in $\PP$.
\item Extension: for every small set $B\subseteq \c$,   every  non-algebraic $L$-type $p(x)\in S_1(B)$ has a realization $b$ in $\c$  such that  $b\not\in \acl(B \PP)$.
\item If $(\c^\prime,\PP^\prime)\equiv (\c,\PP)$ has the  properties 1 and 2, then  $(\c,\PP)\cong (\c^\prime,\PP^\prime)$.
\end{enumerate}
\end{lemma}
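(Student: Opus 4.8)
The plan is to mirror the structure of Lemma \ref{h4}, since lovely pairs and $H$-structures are governed by formally identical density and extension axioms, differing only in the algebraicity condition on the distinguished set ($P(\c)$ is an elementary substructure rather than an independent set). The three parts should therefore go through with the same arguments, and the only genuine work is in part 3.

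For parts 1 and 2, I would first invoke Proposition \ref{l2}: being an $\omega$-saturated model of $T_P$, the monster $(\c,\PP)$ is a lovely pair, so it satisfies the finitary density and extension axioms. Part 1 then follows immediately by compactness/saturation from the finite-$A$ density condition. For part 2, I would argue exactly as in Lemma \ref{h4}: given a non-algebraic $p(x)\in S_1(B)$ over a small set $B$, for each finite $B_0\subseteq B$ the finitary extension property yields a realization of $p\restriction B_0$ lying outside $\acl(B_0\PP)$; the condition ``$b\not\in\acl(B\PP)$'' is captured by an $L_P$-type $\pi(x)$ over $B$ (it is the intersection over all finite tuples $\overline{h}$ from $P$ and all $L$-formulas $\psi$ algebraic in $x$ of the requirements $\neg\psi(x,\overline{h},\ldots)$), and saturation of $(\c,\PP)$ lets us realize $p\cup\pi$. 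I expect these to be routine, almost verbatim copies of the $H$-structure arguments.

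The heart of the proof is part 3, the back-and-forth uniqueness. Here I would take $I$ to be the set of $L$-elementary maps $f$ from $\c$ to $\c'$ with small domain $A$ and range $B$, preserving $P$ (i.e.\ $a\in\PP$ iff $f(a)\in\PP'$), and satisfying the independence conditions $A\ind_{P(A)}\PP$ and $B\ind_{P(B)}\PP'$. The key step is to verify that $I$ is a back-and-forth system, which is the lovely-pairs analogue of Proposition \ref{h1}; I would either cite Proposition \ref{l1} (extending its finite-domain argument to small domains by the same case analysis) or reprove the extension step directly, splitting into the cases $a\in\acl(A)$, $a\in P(\c)$, $a\in\acl(A\PP)$, and $a\notin\acl(A\PP)$, using density in the third case and the extension property (part 2) in the fourth. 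Once $I$ is shown to be a genuine back-and-forth system with the property that every single element can be added to the domain and to the range, I would build an increasing chain $(f_\alpha\mid\alpha\in\On)$ exhausting both $\c$ and $\c'$ and take $f=\bigcup_{\alpha}f_\alpha$, which is the desired isomorphism $(\c,\PP)\cong(\c',\PP')$.

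The main obstacle I anticipate is the $a\in\acl(A)$ case of the back-and-forth step, which behaves differently than for $H$-structures: since $P(\c)$ is an elementary substructure rather than an independent set, an algebraic element $a$ over $A$ may land inside $\PP$ without belonging to $A$, so the $P$-preservation clause ``$a\in\PP$ iff $f(a)\in\PP'$'' must be argued using $A\ind_{P(A)}\PP$ together with the fact that $\PP\preceq\c$. I would handle this by noting that $A\ind_{P(A)}\PP$ forces $\acl(A)\cap\PP=\acl(A)\cap\acl(P(A))\subseteq\acl(P(A))$, so membership of an algebraic point in $\PP$ is controlled by $P(A)$ and is preserved under the $L$-elementary map $f$ (which already respects $P(A)$); the conjugate realization $b\models p^f$ can then be chosen consistently, and the independence conditions for the extended map are checked as in Proposition \ref{l1}. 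Verifying that this preservation is exact is where the argument requires care, but no new idea beyond the independence calculus of geometric theories should be needed.
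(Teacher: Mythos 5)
Your proposal is correct and follows exactly the route the paper takes: the paper's proof of Lemma~\ref{l4} is literally ``Like the proof of Lemma~\ref{h4},'' and you reproduce that argument (saturation for density, an $L_P$-type plus compactness for extension, and a small-domain back-and-forth system for uniqueness). Your extra observation about the $a\in\acl(A)$ case --- that $A\ind_{P(A)}\PP$ together with $\PP$ being algebraically closed gives $\acl(A)\cap\PP=\acl(P(A))$, so $P$-membership of algebraic points is an $L$-condition over $A$ preserved by $f$ --- is exactly the adjustment needed where the $H$-structure argument used independence of $\HH$, and it is carried out correctly.
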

\begin{proof} Like the proof of Lemma~\ref{h4}.
\end{proof}

\begin{prop} \label{l5} Let $T$ be a geometric theory,  $\c$ the monster model of $T$ and  $(\c,\PP)$  the monster model of the theory  $T_P$ of lovely pairs of models of $T$.   For any small set $A$,  $o(\PP/A)\neq \{\PP\}$.
\end{prop}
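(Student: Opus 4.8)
The plan is to follow verbatim the strategy used for $H$-structures in Proposition~\ref{h5}, since Lemma~\ref{l4} supplies for lovely pairs exactly the density and extension properties that Lemma~\ref{h4} supplied in the $H$-structure case. The idea is to produce a single non-algebraic type over $A$ with two realizations that behave differently with respect to $\PP$, and then to transport one realization to the other by an $A$-automorphism, thereby moving $\PP$ off itself.

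First I would fix a non-algebraic type $p(x)\in S_1(A)$; such a type exists because $T$ has infinite models and $A$ is small. Applying the density clause of Lemma~\ref{l4}, I obtain a realization $a\models p$ with $a\in\PP$. Applying the extension clause of the same lemma (taking $B=A$), I obtain a realization $b\models p$ in $\c$ with $b\not\in\acl(A\PP)$. Thus $a$ and $b$ realize the same type over $A$ but sit on opposite sides of $\PP$ in the relevant sense.

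Since $a\equiv_A b$, there is an automorphism $f\in\Aut(\c/A)$ with $f(a)=b$. Then $b=f(a)\in f(\PP)$, and I claim $f(\PP)\neq\PP$: otherwise $b\in\PP\subseteq\acl(A\PP)$, contradicting the choice of $b$. Hence $f(\PP)\in o(\PP/A)\smallsetminus\{\PP\}$, which gives $o(\PP/A)\neq\{\PP\}$ as required.

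I do not expect any genuine obstacle here. The only point to verify beyond the direct appeals to the density and extension clauses of Lemma~\ref{l4} is the trivial inclusion $\PP\subseteq\acl(A\PP)$, which makes membership of $b$ in $\PP$ incompatible with $b\not\in\acl(A\PP)$. The argument is therefore a routine adaptation of Proposition~\ref{h5}, with $\HH$ replaced by $\PP$ and Lemma~\ref{h4} replaced by Lemma~\ref{l4}.
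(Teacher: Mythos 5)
Your proof is correct and is exactly the argument the paper intends: the paper's proof of Proposition~\ref{l5} simply says ``like in Proposition~\ref{h5}, now with the help of Lemma~\ref{l4}'', which is precisely the adaptation you carried out (with the small extra detail, correctly supplied, that $f(\PP)=\PP$ would force $b\in\PP\subseteq\acl(A\PP)$).
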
 
\begin{proof} Like in Proposition~\ref{h5}, now with the help of Lemma~\ref{l4}.
\end{proof}

\begin{prop}\label{l6} Let $T$  have $U$-rank one and let $\c$ be the monster model of $T$. If $(\c,\PP)$  is the monster model of the corresponding theory $T_P$ of lovely pairs, then  $e(\PP)\neq o(\PP)$.
\end{prop}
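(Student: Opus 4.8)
The plan is to reduce the statement to the construction already carried out in Case~1 of the proof of Theorem~\ref{su1}, and then to transfer it from $H$-structures to lovely pairs by means of Remark~\ref{l3}. The first step is to notice that the hypothesis forces us into that case. Since $T$ has $U$-rank one it is superstable, hence in particular supersimple of $\SU$-rank one, so Theorem~\ref{su1} and all its ingredients are available. Moreover stability yields stationarity over $\acli(\emptyset)$: every non-algebraic type $p(x)\in S_1(\acli(\emptyset))$ has a unique non-forking, equivalently a unique non-algebraic, global extension $\g{p}\in S_1(\c)$. Thus the dichotomy in the proof of Theorem~\ref{su1} always lands in Case~1, and the random-graph-like Case~2 simply cannot arise. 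This is why the hypothesis here is $U$-rank one rather than merely $\SU$-rank one.

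Next I would take, verbatim from Case~1, the small set $\HH^\prime\subseteq\c$ of algebraically independent elements for which $(\c,\HH^\prime)$ is an $H$-structure; recall that $\HH^\prime$ was built from $\omega$ suitably independent realizations of each of the boundedly many non-algebraic types over $\acli(\emptyset)$, and that verifying density for $\HH^\prime$ used precisely the uniqueness of the global non-algebraic extensions established above. I then set $\PP^\prime=\acl(\HH^\prime)$. By Remark~\ref{l3} the pair $(\c,\PP^\prime)$ is a lovely pair of $T$. Since $(\c,\PP)$ is also a lovely pair, Proposition~\ref{l1} shows that the two are partially isomorphic via the corresponding back-and-forth system, and in particular $(\c,\PP^\prime)\equiv(\c,\PP)$; that is, $\PP^\prime\in e(\PP)$.

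It remains to check that $\PP^\prime\notin o(\PP)$, and for this I would argue by cardinality. The set $\HH^\prime$ is small, and in a geometric theory the algebraic closure of a small set is small (each element of $\acl(\HH^\prime)$ is algebraic over a finite subtuple via one of $|L|$-many formulas), so $\PP^\prime=\acl(\HH^\prime)$ is small. On the other hand, the predicate $\PP$ of the monster model has cardinality $|\c|$, since by Lemma~\ref{l4} a fixed non-algebraic type is realized in $\PP$ $|\c|$-many times. An isomorphism $(\c,\PP^\prime)\cong(\c,\PP)$ would be an automorphism of $\c$ mapping $\PP^\prime$ onto $\PP$, which is impossible on cardinality grounds. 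Hence $\PP^\prime\in e(\PP)\smallsetminus o(\PP)$, so $e(\PP)\neq o(\PP)$. The one point demanding genuine care is the reduction in the first paragraph: the real content is that stability upgrades the existence of non-forking extensions into \emph{uniqueness} over $\acli(\emptyset)$, which is exactly what makes the small predicate $\HH^\prime$ genuinely dense; once that is granted, the passage through Remark~\ref{l3} and the final cardinality comparison are routine.
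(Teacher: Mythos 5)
Your proposal is correct and follows essentially the same route as the paper: reduce to Case 1 of Theorem~\ref{su1} via stability, set $\PP^\prime=\acl(\HH^\prime)$, invoke Remark~\ref{l3} and elementary equivalence of lovely pairs, and conclude by smallness of $\PP^\prime$. The extra details you supply (stationarity over $\acli(\emptyset)$, the explicit cardinality comparison) are just elaborations of what the paper leaves implicit.
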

\begin{proof} Consider the  $H$-structure $(\c,\HH^\prime)$ constructed in the  proof of Theorem~\ref{su1} and  put $\PP^\prime= \acl(\HH^\prime)$. This way we obtain a lovely pair $(\c,\PP^\prime)$ elementarily equivalent to the monster model $(\c,\PP)$ of $T_P$ but not isomorphic to it. The reason is that,  assuming $U$-rank one, we are in a stable theory and, therefore, in the case  1 of the proof of~Theorem~\ref{su1}.   Hence, $\HH^\prime$ is a small set and $\PP^\prime$ is a small model, which clearly implies that $(\c,\PP)\not\cong (\c,\PP^\prime)$.  
\end{proof}

We conjecture that Proposition~\ref{l6} can be generalized to the $SU$-rank one case.  We thank the anonymous referee for some remarks on an earlier version of this result. Case 2 of the proof of~Theorem~\ref{su1} does not seem to give the appropriate result for the corresponding lovely pairs. Of course, if $\acl(A)= A$  for every set $A$, then lovely pairs and $H$-structures coincide and we get the result. But even with a richer algebraic  closure we can prove $e(\PP)\neq o(\PP)$ in some particular cases, as the following example shows.

\begin{ex1} Let $T$ be  the theory  of a vector space $V$ over a finite field $F$ equipped with a non-degenerate simplectic  bilinear form $\beta$ (see~\cite{Hart00}  or~\cite{Granger99} for details). It is an unstable $SU$-rank one theory. Let $\c$ be the monster model of $T$  and let $A_0=\{a_i\mid i<\omega\}$ be a set of orthogonal (i.e., $\beta(a_i,a_j)=0$)  independent elements.  We can carry on a construction similar to that of case 2 in the proof of~Theorem~\ref{su1}, defining sets $A_\alpha$ and $H_\alpha$ for every ordinal $\alpha$ in such a way that for every finite subset $B$ of $A_\alpha$, for every non-algebraic type $r_i(x)\in S_1(B)$  there is some $a_i\in H_{\alpha+1}$ realizing $r_i$ which   is independent of the previous  elements in the $H$-part  and moreover it is orthogonal to all but finitely many elements of $A_0$. That way,  we obtain an $H$-structure $(\c, \HH^\prime)$ with some countable infinite set $A_0\subseteq \HH^\prime$  such that every $a\in \HH^\prime$ is orthogonal to all but finitely many elements of $A_0$.   There is a  type $p(x)\in S_1(A_0)$  containing the formulas  $\beta(x,a)=1$  for every $a\in A_0$  and this type is clearly omitted in $\PP^\prime= \acl(\HH^\prime)$. Hence $(\c,\PP^\prime)$ is a lovely pair not isomorphic to the monster model $(\c,\PP)$ of the theory of lovely pairs of $T$. 
 \end{ex1}

\begin{prop}\label{l7}  Let $T$ be an o-minimal theory such that $\dcl(A)= \bigcup_{a\in A}\dcl(a)$ for every non-empty set $A$.  If $(\c,\PP)$ is the monster model of the theory  $T_P$  of lovely pairs of models of  $T$, then $o(\PP)=e(\PP)$.
\end{prop}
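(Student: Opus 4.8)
The plan is to apply Lemma~\ref{l4}(3): to prove $e(\PP)=o(\PP)$ it suffices to check that every $\PP'$ with $(\c,\PP')\equiv(\c,\PP)$ satisfies the monster-level Density and Extension properties 1 and 2 of that lemma, for then $(\c,\PP')\cong(\c,\PP)$ and hence $\PP'\in o(\PP)$. The subtle point throughout is that $(\c,\PP')$ need not be saturated as an $L_P$-structure, so these two properties cannot be read off from saturation and must instead be extracted from first-order sentences true in $(\c,\PP)$ together with the hypothesis on $\dcl$. As in the proof of Proposition~\ref{o3}, I would first reduce both properties to the case of a non-algebraic type $p(x)\in S_1(M)$ over a small elementary submodel $M\preceq\c$: given a non-algebraic type over an arbitrary small $B$, I choose a small model $M\supseteq B$ and a non-algebraic extension $q\in S_1(M)$ (which exists because the convex set of realizations of $p$ is larger than $|M|$); a realization of $q$ in $\PP'$, respectively outside $\acl(M\PP')$, then witnesses the property for $p$ over $B$, using $\acl(B\PP')\subseteq\acl(M\PP')$.

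For Density I would argue exactly as in Proposition~\ref{o3}. By o-minimality $p$ is determined by a cut $M=B_1\cup B_2$ with $B_1<B_2$, and since $B_1,B_2$ are small I pick $a_1,a_2\in\c$ with $B_1<a_1<a_2<B_2$, so that every element of $(a_1,a_2)$ realizes $p$. The density of $P$ is expressed by the first-order sentence $\forall x_1 x_2(x_1<x_2\rightarrow\exists x(P(x)\wedge x_1<x<x_2))$, which holds in $(\c,\PP)$ and hence in $(\c,\PP')$; it yields some $a'\in\PP'$ with $a_1<a'<a_2$, and $a'$ realizes $p$.

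The heart of the argument, and the place where the hypothesis $\dcl(A)=\bigcup_{a\in A}\dcl(a)$ is essential, is Extension. The key observation is that $\dcl(M\PP')$ collapses to $M\cup\PP'$: since $\PP'\preceq\c$ we have $\dcl(\PP')=\PP'$, and since $M$ is a model $\dcl(M)=M$; by additivity $\dcl(M\PP')=\bigcup_{a\in M\cup\PP'}\dcl(a)=\dcl(M)\cup\dcl(\PP')=M\cup\PP'$, and recall $\acl=\dcl$ in the o-minimal setting. With the same cut and the same $a_1,a_2$ as above, Extension over $M$ then amounts to finding $b\in(a_1,a_2)$ with $b\notin\PP'$, since $(a_1,a_2)\cap M=\emptyset$ automatically. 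This is captured by the co-density sentence $\forall x_1 x_2(x_1<x_2\rightarrow\exists x(x_1<x<x_2\wedge\neg P(x)))$, which holds in the genuine lovely pair $(\c,\PP)$ by its Extension property (a point of $(a_1,a_2)$ outside $\acl(\{a_1,a_2\}\PP)$ lies outside $\PP$) and so transfers to $(\c,\PP')$. Such a $b$ realizes $p$ and lies outside $\acl(M\PP')=M\cup\PP'$, giving Extension.

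I expect the main obstacle to be precisely the failure of saturation of $(\c,\PP')$, which blocks the type-omitting/compactness argument used for the monster model in Lemma~\ref{l4}(2). The additivity hypothesis is exactly what circumvents this: it prevents binary (or higher-arity) $0$-definable functions from producing elements of $\acl(M\PP')$ outside $M\cup\PP'$, so that the genuinely infinitary requirement ``$b\notin\dcl(h)$ for every $h\in\PP'$'' reduces to the single first-order condition $\neg P(b)$, which is preserved under $\equiv$. I would then double-check the boundary cases where the cut is unbounded (types ``$x>M$'' or ``$x<M$''), which are handled identically since $\c$ has no endpoints, and confirm that the reduction to models is legitimate for Extension via the inclusion of algebraic closures noted above.
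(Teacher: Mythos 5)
Your proposal is correct and follows essentially the same route as the paper: reduce to non-algebraic types over a small elementary submodel $M$, handle Density via the cut and the first-order density of $P$, and handle Extension by transferring co-density of $\PP'$ and then using $\dcl(M\PP')=\dcl(M)\cup\dcl(\PP')=M\cup\PP'$ (since $\PP'\preceq\c$ is $\dcl$-closed). Your closing remark about why the weaker additivity hypothesis suffices here, in contrast to Proposition~\ref{o3}, is precisely the point the paper makes.
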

\begin{proof} Similar to the proof of Proposition~\ref{o3}, but starting with $(\c,\PP)\equiv (\c,\PP^\prime)$ and checking that $(\c,\PP^\prime)$ is saturated.  In this case the assumption on $\dcl$ is weaker, but it suffices to check the extension property over arbitrary  elementary submodels $M$ of $\c$: once we have obtained some $a\in \c\smallsetminus\PP^\prime $ in the right cut $B_1 <a<B_2$ using the codensity of $\PP^\prime$, we observe that $a\not\in\dcl(M\PP^\prime)$  since  $a\not \in \PP^\prime =\dcl(\PP^\prime)$ and  $a\not \in M=\dcl(M)$.
\end{proof}

\begin{prop} \label{l8}Let $T$ be the theory of an o-minimal expansion of the ordered additive  group of the real numbers. If   $(\c,\PP)$ is the monster model of the theory of lovely pairs of $T$, then  $o(\PP)\neq e(\PP)$.
\end{prop}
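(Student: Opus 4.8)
The plan is to mimic the strategy of Proposition~\ref{o0}, producing a lovely pair elementarily equivalent to $(\c,\PP)$ but not isomorphic to it, using the additive structure to verify the extension axiom. Concretely, I would start with the monster model $(\c,\PP)$ of the theory of lovely pairs and let $\HH$ be a maximal algebraically independent subset of $\PP$, so that $\PP=\acl(\HH)=\dcl(\HH)$ (recall $\acl=\dcl$ in the o-minimal setting). I would then extend $\HH$ to a full basis $\HH^\prime$ of $\c$ and set $\PP^\prime=\dcl(\HH^\prime)$. Since $\HH\subsetneq\HH^\prime$ whenever $\PP\neq\c$, and one checks as in Proposition~\ref{o0} that $\PP\neq\c$ (any non-algebraic type over $\emptyset$ is realized outside $\dcl(\PP)$ by the extension property), the structure $(\c,\PP^\prime)$ will be genuinely larger. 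The key point is to show $(\c,\PP^\prime)$ is again a lovely pair, i.e. a model of $T_P$, which gives $(\c,\PP)\equiv(\c,\PP^\prime)$ by Proposition~\ref{l2} and Corollary~\ref{h3}-style completeness.

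The substructure and density axioms for $T_P$ are immediate: $\PP^\prime=\dcl(\HH^\prime)$ is a definably closed, hence elementarily closed (by o-minimality), substructure, and it contains $\PP$, so density is inherited from density of $\PP$. The real content, as in Proposition~\ref{o0}, is the extension axiom. Here I would reuse verbatim the function-iteration argument of Proposition~\ref{o0} with $f(x,y)=x+y$: given parameters $a_1,\dots,a_n$ with $\varphi(\bar a,\c)$ containing an interval $(b_1,b_2)$, and given $\psi$ algebraic in $x$ over $\PP^\prime$-tuples, I pick $h_1^\prime,\dots,h_k^\prime\in\HH^\prime$ defining the $a_i$ and then build a witness $a=h_1^{\prime\prime}+\dots+h_{m+k+1}^{\prime\prime}$ landing in $(b_1,b_2)$ with the $h_i^{\prime\prime}$ chosen from appropriate subintervals. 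Because at least one $h_i^{\prime\prime}$ avoids the finitely many relevant basis elements, $a$ is interdefinable with a basis element algebraically independent of the rest, so $a\notin\acl(\{a_1,\dots,a_n\}\cup\PP^\prime$-parameters$)$, yielding $\neg\psi$. This is exactly the extension axiom, so $(\c,\PP^\prime)\models T_P$.

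Finally I would argue non-isomorphism. Since $\HH^\prime$ is a basis, $\PP^\prime=\dcl(\HH^\prime)=\c$, so the pair $(\c,\PP^\prime)=(\c,\c)$ has $P$ interpreted as the whole universe. This pair fails the extension property of Lemma~\ref{l4} (there is no $b$ with $b\notin\acl(B\PP^\prime)=\c$), hence it cannot be isomorphic to the monster model $(\c,\PP)$, which does satisfy extension. Therefore $\PP^\prime\in e(\PP)\smallsetminus o(\PP)$ and $o(\PP)\neq e(\PP)$.

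The main obstacle I anticipate is checking the hypothesis needed to invoke the $f(x,y)=x+y$ argument cleanly in the \emph{lovely pair} setting rather than the $H$-structure setting: the extension axiom for $T_P$ quantifies the forbidden witnesses over $y_i\in P$, i.e. over all of $\PP^\prime=\dcl(\HH^\prime)$, not merely over basis elements. So I must verify that non-algebraicity of $a$ over the \emph{definable closure} of the chosen basis elements genuinely gives $\neg\psi(a_1,\dots,a_n,h_1,\dots,h_m,a)$ for all $h_i\in\PP^\prime$; this follows because each such $h_i$ lies in $\dcl$ of finitely many basis elements, so the whole configuration is captured by finitely many elements of $\HH^\prime$, and the avoidance of one $h_i^{\prime\prime}$ still suffices. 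Once this reduction to finitely many basis parameters is made explicit, the argument of Proposition~\ref{o0} applies and the proof goes through. Indeed the cleanest route is simply to note that $(\c,\PP^\prime)=(\c,\c)$ makes the verification of the extension axiom transparent and reduces the whole proposition to applying Proposition~\ref{o0} with $f(x,y)=x+y$ and then passing from the $H$-structure $\HH^\prime$ to its definable closure.
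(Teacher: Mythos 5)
Your construction breaks down at the last step, and the failure is fatal rather than cosmetic. You take $\HH^\prime$ to be a full basis of $\c$ and set $\PP^\prime=\dcl(\HH^\prime)=\c$. But the extension property is part of the first-order axiomatization $T_P$ (axiom schema 3), so a pair that fails it is not merely non-isomorphic to the monster lovely pair --- it is not even elementarily equivalent to it. Concretely, take $\psi(x_1,y_1,x)$ to be $x=y_1$, which is algebraic in $x$; the extension axiom demands some $a$ realizing $\varphi$ with $a\neq h$ for all $h\in \PP^\prime$, which is impossible when $\PP^\prime=\c$. So $(\c,\c)\not\models T_P$, hence $\PP^\prime\notin e(\PP)$, and your witness proves nothing. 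The reduction you propose in the last paragraph --- ``each $h_i\in\PP^\prime$ lies in $\dcl$ of finitely many basis elements, so the avoidance of one $h_i^{\prime\prime}$ still suffices'' --- is exactly where the analogy with Proposition~\ref{o0} fails: in the $H$-structure axiom the forbidden parameters range over $H$ itself, so a witness built from $m+k+1$ basis elements can always dodge the $m+k$ relevant ones; in the lovely-pair axiom they range over $\dcl(\HH^\prime)$, and the number of basis elements needed to define a given $h_i$ is unbounded (indeed one may take $h_1$ to be your witness $a$ itself).

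The repair is to stop short of a full basis: the extension property forces $\PP$ to have non-small codimension in the monster (by Lemma~\ref{l4}.2), so any extension of $\PP$ to an algebraically closed $\PP^\prime$ of \emph{infinite but small} codimension (say $\omega$) is automatically non-isomorphic to $(\c,\PP)$, while still satisfying the extension axiom because $\acl(A\PP^\prime)$ keeps infinite codimension for finite $A$ and is therefore co-dense --- the co-density being where the additive group structure enters, via the translation argument showing a proper algebraically closed subgroup cannot contain an interval. This is the paper's route; the function-iteration machinery of Proposition~\ref{o0} is not needed here at all.
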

\begin{proof}  $\PP$ is dense and co-dense and we can extend it to some algebraically closed $\PP^\prime\subseteq \c$ with infinite but small algebraic co-dimension, say with co-dimension $\omega$. Notice that $\PP^\prime$ (as well as any algebraically closed set extending $\PP$)  is an elementary submodel of $\c$.   If $\PP^\prime$ contains some interval $(a,b)$, then  by translation for every $x\in \PP^\prime$ there is an interval $(a^\prime,b^\prime)\subseteq \PP^\prime$ such that $x\in(a^\prime,b^\prime)$: take some point $x_0\in (a,b)$ and put  $a^\prime = x-(x_0-a)$ and $b^\prime= x+(b-x_0)$. It follows that  $\PP^\prime$  is a union of intervals and hence it is an open subgroup in the order topology.  Then  $\PP^\prime$ is closed and by density $\PP^\prime= \c$, a contradiction.  This shows that $\PP^\prime$ is co-dense. Obviously, $(\c,\PP)\not \cong (\c,\PP^\prime)$. We now prove that $(\c,\PP^\prime)$ is a lovely pair, which implies $(\c,\PP)\equiv (\c, \PP^\prime)$ and therefore $e(\PP)\neq o(\PP)$. Since $\PP\subseteq \PP^\prime$, every non-algebraic type over a finite set has a realization in $\PP^\prime$. Now we check the extension property.  Consider a finite set $A$ and a non-algebraic type $p(x)\in S_1(A)$ and let us check that $p$ has a realization $a\in \c$  such that $a\not\in\PP^{\prime\prime}=\acl(A,\PP^\prime)$. There is an interval $(b_1,b_2)$ all whose elements satisfy $p$.
Since $A$ has finite dimension, $\PP^{\prime\prime}$ has infinite codimension and exactly as before we see that $\PP^{\prime\prime}$ is co-dense.  Hence, there is some $a\in (b_1,b_2)$ such that $a\not\in\acl(A\cup \PP^\prime)$.
\end{proof}

If $T$ is  the theory of an o-minimal expansion of the ordered additive  group of the real numbers, then $T_P$  can be more easily axiomatized, it is enough to require that $\PP$ is a dense proper elementary substructure (see~\cite{Dri98}). Hence Proposition~\ref{l8} admits a shorter proof, it is enough to consider an extension $\PP^\prime$ of $\PP$ of codimension $1$. We thank A.~Fornasiero for some comments on this.

\noindent{\sc
Departament de Matem\`atiques i Inform\`atica\\
Universitat de Barcelona\\
{\tt e.casanovas@ub.edu}\\

\noindent{\sc
Departamento de Matem\'aticas\\
Universidad de los Andes, Bogot\'a}\\
{\tt lcorredo@uniandes.edu.co}\\

\end{document}